\documentclass[11pt]{article}

\usepackage[margin=1in]{geometry}
\usepackage{amsmath,amssymb,amsthm,mathtools}
\usepackage[colorlinks=true,linkcolor=blue,urlcolor=blue,citecolor=blue]{hyperref}
\usepackage{natbib}

\newtheorem{thm}{Theorem}[section]
\newtheorem{lem}[thm]{Lemma}
\newtheorem{cor}[thm]{Corollary}
\newtheorem{remark}[thm]{Remark}

\numberwithin{equation}{section}

\newcommand{\LR}{\mathrm{LR}}

\newcommand{\dd}{\,\mathrm{d}}

\newcommand{\unif}{\mathrm{unif}}

\newcommand{\Pois}{\mathrm{Poi}}
\newcommand{\Multi}{\mathrm{Mult}}

\newcommand{\Bin}{\mathrm{Bin}}

\newcommand{\Scal}{\mathcal{S}}
\newcommand{\Prp}[1]{\Pr\left[#1 \right]}
\newcommand{\reals}{\mathbb R}

\newcommand{\Ncal}{\mathcal{N}}

\newcommand{\one}{\mathbf{1}}

\newcommand{\Var}[1]{\mathrm{Var}\left[ #1\right]}
\newcommand{\ex}[1]{\ensuremath{\mathbb{E}\left[ #1\right]}}
\newcommand{\exsub}[2]{\ensuremath{\mathbb{E}_{#1}\left[ #2\right]}}

\newcommand{\Cov}[1]{\mathrm{Cov} \left[#1\right]}
\newcommand{\Corr}[1]{\mathrm{corr} \left[#1\right]}

\title{Sharp Lower Bound on the Minimax Risk for Multinomial Uniformity Testing via a Conditional Central Limit Theorem}

\author{Alon Kipnis\\
School of Computer Science, Reichman University\\
\href{mailto:alon.kipnis@runi.ac.il}{alon.kipnis@runi.ac.il}}

\date{}

\begin{document}

\maketitle

\begin{abstract}
We study minimax goodness-of-fit testing for uniformity from $n$ multinomial observations over $N$ categories against $\ell_p$ departures of size $\epsilon_n$. Writing $u_n:=\epsilon_n^2 n\,N^{3/2-2/p}/\sqrt{2}$ for the associated signal-to-noise ratio, we focus on the intermediate regime $N=o(n^2)$ with $u_n\to u^*\in(0,\infty)$, in which the minimax risk converges to a nontrivial constant. In the Poissonized version of the problem this constant equals $2\Phi(-u^*/2)$ \cite{Kipnis2025minimax}, yielding an upper bound on the multinomial minimax risk. Here we prove the matching lower bound. The key step is a conditional central limit theorem for weighted sums under a Poisson mixture prior, conditioned on the total count. Together with the upper bound in \cite{Kipnis2025minimax}, this gives an exact sharp-constant characterization of the multinomial minimax risk in the intermediate regime.
\end{abstract}

\medskip
\noindent\textbf{Keywords:} uniformity testing; multinomial; minimax risk; de-Poissonization; conditional central limit theorem.

\medskip
\noindent\textbf{MSC 2020:} 62G10 (Primary), 60F05 (Secondary).

\section{Problem setup}
\label{sec:setup}

Denote by $O_i$ the count of category $i$. For a given $N$-dimensional multinomial distribution $\mathbf{q} := (q_1,\ldots,q_N)$ with $q_i \geq 0$ and $\sum_{i=1}^N q_i = 1$, let $H(\mathbf{q})$ denote the model
\begin{align}
    \label{eq:hyp_Q}
H(\mathbf{q}) \,:\, (O_1,\ldots,O_N) \sim \Multi(n,\mathbf{q}).
\end{align}
Namely, $n$ independent samples from $\mathbf{q}$. 
The problem of testing the uniformity of $\mathbf{q}$ corresponds to testing the null hypothesis 
\begin{align}
    H_0 = H(\mathbf{q}_{\unif}),\quad \text{where} \quad \mathbf{q}_{\unif} := (1/N,\ldots,1/N), 
    \label{eq:null_hyp}
\end{align} 
against alternatives in which the distribution $\mathbf{q}$ deviates from uniformity in the $\ell_p$ sense (cf.\ \cite{ingster2003nonparametric,chhor2022sharp}):
\[
H_1 = H(\mathbf{q}),\quad \text{for some~~} \mathbf{q} \in A_N(\epsilon,p),
\]
where 
\begin{align}
    \label{eq:alt_hyp_def}
A_N(\epsilon,p) := \left\{ \mathbf{q} \in [0,1]^N \, :\, \left\| \mathbf{q} - \mathbf{q}_{\unif} \right\|_p \geq \epsilon, \, \|\mathbf{q}\|_1 = 1 \right\}.
\end{align}
Notice that $A_N(\epsilon,p)$ is not empty provided $\epsilon \leq N^{1/p-1}$. 

A test $\psi$ maps a random sample $(O_1,\ldots,O_N)$ to a decision $\{0,1\}$. The risk of $\psi$ for testing $H_1$ against $H_0$ is defined as
\[
R(\psi; H_1, H_0) := \Prp{\psi = 1 \mid H(\mathbf{q}_{\unif})}  + \sup_{\mathbf{q} \in A_N(\epsilon,p)} \Prp{\psi = 0 \mid H(\mathbf{q})}. 
\]
The minimax risk is defined as
\begin{align}
\label{eq:minimax_def}
    R^* := \inf_{\psi} R(\psi; H_1, H_0).
\end{align}

\section{Main results}
\label{sec:main-results}

Assume that $n$ tends to infinity, with $N$ and $\epsilon$ depending on $n$. 
Define
\begin{align}
    \label{eq:SNR_def}
    u_n := \frac{\epsilon_n^2 n N_n^{3/2-2/p}}{\sqrt{2}}.
\end{align}
We call $u_n$ the signal-to-noise ratio (SNR).

It follows from previous works that $R^* \to 0$ if and only if $u_n \to \infty$, and 
$R^* \to 1$ if and only if $u_n \to 0$, provided $A_{N}(\epsilon,p)$ is non-empty \cite{balakrishnan2019hypothesis,chhor2022sharp}, with the case $p=1$ famously characterized in \cite{paninski2008coincidence}. Furthermore, by the Poisson setting studied in \cite{Kipnis2025minimax}, we have
\begin{align}
\limsup_{n \to \infty} R^* \leq 2 \Phi(-u^*/2),
\label{eq:upper_bound}
\end{align}
whenever $u_n \to u^*$ with $N = o(n^2)$ for some $u^* \in (0,\infty)$. The main result of this note establishes the matching lower bound.

\begin{thm}
\label{thm:main}
    Consider the hypothesis testing problem under the minimax setting of \eqref{eq:minimax_def}. Suppose that $N_n$ goes to infinity with $N_n = o(n^2)$, $n = O(N)$ and $\lim_{n \to \infty} u_n = u^*$, for some $u^* \in (0,\infty)$. Then
    \begin{align*}
        \liminf_{n \to \infty} R^* \geq 2 \Phi(-u^*/2).  
    \end{align*}
\end{thm}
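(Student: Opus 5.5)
The plan is the standard Bayesian reduction. We place a prior $\pi_n$ on $A_N(\epsilon_n,p)$, possibly only approximately supported there, and use
\[
R^* \ge 1 - \mathrm{TV}\bigl(P_0, \bar P_1\bigr) - \pi_n\bigl(A_N(\epsilon_n,p)^c\bigr),
\]
where $P_0=\Multi(n,\mathbf{q}_{\unif})$ and $\bar P_1=\int \Multi(n,\mathbf{q})\,\pi_n(\dd\mathbf{q})$. It then suffices to show that $\mathrm{TV}(P_0,\bar P_1)\to 1-2\Phi(-u^*/2)$, i.e.\ $2\Phi(u^*/2)-1$.

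For the prior we follow the Poisson construction of \cite{Kipnis2025minimax}. Take $\lambda_i = (n/N)(1+\eta_i)$ with $\eta_i$ i.i.d.\ symmetric $\pm\delta$, where $\delta$ is chosen so that $\|\mathbf q-\mathbf q_{\unif}\|_p \approx \delta N^{1/p-1}$ equals $\epsilon_n(1+o(1))$. A slightly inflated $\delta$ makes the $\ell_p$ constraint hold with probability tending to one by concentration. Then set $q_i=\lambda_i/\sum_j\lambda_j$.

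Conditionally on the $\eta$'s, a Poisson vector $O_i\sim\Pois(\lambda_i)$ given $\sum_i O_i=n$ is exactly $\Multi(n,\mathbf q)$. Hence $\bar P_1$ is the Poisson mixture conditioned on the total count, and $P_0$ is the null Poisson vector conditioned on the same total. The likelihood ratio of the mixture to the null, as a function of the counts, differs only by the factor coming from $\sum_i\lambda_i$ versus $n$. Under the symmetric prior this factor is $1+o_P(1)$, since $\sum_i\eta_i=O_P(\sqrt N\delta)$ is small at this scale.

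The key step is to show that, conditionally on $\sum_i O_i=n$, the log likelihood ratio is asymptotically normal: $\mathcal N(-\sigma^2/2,\sigma^2)$ under $P_0$ and $\mathcal N(\sigma^2/2,\sigma^2)$ under $\bar P_1$, with $\sigma=u^*$. Expanding $\log\ex{\prod_i(1+\eta_i)^{O_i}e^{-\lambda_0\eta_i}}$ coordinatewise, the LR factorizes. Its logarithm is a sum $\sum_i g_n(O_i)$ of i.i.d.-type terms, to leading order a weighted sum $\tfrac{\delta^2}{2}\sum_i\bigl[(O_i-\lambda_0)^2-O_i\bigr]$, a chi-square-type statistic, with $\lambda_0=n/N$. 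The unconditional CLT for this under Poisson sampling, with variance $\delta^4\lambda_0^2N/2\to(u^*)^2$, is the computation of \cite{Kipnis2025minimax}.

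The conditional CLT is the main obstacle. We need joint asymptotic normality of $(S_n,T_n)$, where $S_n=\sum_i g_n(O_i)$ and $T_n=(\sum_i O_i-n)/\sqrt n$. We also need a local limit theorem for $T_n$ on the lattice, uniformly over the conditioning, to transfer the joint CLT to the conditional law given $T_n=0$. We would approach this through characteristic functions, in the spirit of Bartlett/Holst: write
\[
\ex{e^{itS_n}\mid T_n=0}=\frac{\int_{-\pi\sqrt n}^{\pi\sqrt n}\ex{e^{itS_n+isT_n}}\dd s}{\int_{-\pi\sqrt n}^{\pi\sqrt n}\ex{e^{isT_n}}\dd s}.
\]
We then show that the integrand converges pointwise to the Gaussian joint characteristic function and admits a dominating bound. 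The bound uses $|\ex{e^{isO}}|\le\exp(-c\lambda_0 s^2/n)$ for $|s|\le\pi\sqrt n$, and this needs $\lambda_0=n/N$ not too small relative to the perturbation. The assumption $n=O(N)$ with $N=o(n^2)$ keeps $\lambda_0$ bounded and $N\lambda_0^2\to\infty$. With $\lambda_0$ bounded, Lyapunov conditions must be verified carefully, since the $O_i$ are not near-Gaussian individually. The covariance between $S_n$ and $T_n$ is asymptotically negligible or removable by centering, so the conditional limit coincides with the unconditional one.

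Given both conditional limits, Le Cam's third lemma consistency is automatic. The total variation equals $\Pr[\mathcal N(\sigma^2/2,\sigma^2)>0]-\Pr[\mathcal N(-\sigma^2/2,\sigma^2)>0]=2\Phi(u^*/2)-1$, which gives $\liminf R^*\ge 2\Phi(-u^*/2)$.
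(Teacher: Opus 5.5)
Your plan is essentially the paper's proof. It uses the same symmetric two-point perturbation prior, identifies the multinomial model with the Poisson model conditioned on $S_N=n$ up to a $1+o(1)$ normalization factor (Lemma~\ref{lem:LR_equivalence}), reduces the likelihood ratio to the chi-square-type statistic $T(w^*)$, and proves a conditional CLT by Fourier inversion of the lattice variable $S_N$ at $n$ with a pointwise-limit-plus-domination argument (Theorem~\ref{thm:CLT}, where the domination is carried out over three frequency regions). The differences are cosmetic: you conclude through total variation and Le Cam's third lemma rather than the Neyman--Pearson Bayes risk of $\psi_{w^*,\tau^*}$, and you normalize the prior onto the simplex directly rather than going through Lemmas~\ref{lem:risk_equivalence} and~\ref{lem:Bayes_equivalence}.
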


By the upper bound \eqref{eq:upper_bound}, we get:
\begin{cor} 
\label{cor:main}
Under the assumptions of Theorem~\ref{thm:main},
\begin{align}
    \label{eq:minimax_risk_equality}
    \lim_{n \to \infty} R^* = 2 \Phi(-u^*/2).  
\end{align}
\end{cor}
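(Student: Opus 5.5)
The corollary follows by sandwiching $R^*$ between the lower bound of Theorem~\ref{thm:main} and the upper bound \eqref{eq:upper_bound} inherited from the Poissonized analysis of \cite{Kipnis2025minimax}. No new probabilistic argument is needed. The only work is to check that the hypotheses of Theorem~\ref{thm:main} are enough for \eqref{eq:upper_bound} to apply, and then to combine a $\liminf$ and a $\limsup$.

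\textbf{Step 1 (upper bound applies).} The bound \eqref{eq:upper_bound} is stated under the conditions $u_n \to u^*\in(0,\infty)$ and $N = o(n^2)$. The assumptions of Theorem~\ref{thm:main} contain both conditions, together with the further requirements $N_n\to\infty$ and $n = O(N)$. The extra conditions only shrink the class of sequences, so the upper bound still holds:
\[
\limsup_{n\to\infty} R^* \le 2\Phi(-u^*/2).
\]
Implicitly, $A_{N}(\epsilon_n,p)$ must be nonempty so that $R^*$ is meaningful. This is already part of the standing setup, since $R^*$ is defined over that class.

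\textbf{Step 2 (lower bound).} Theorem~\ref{thm:main}, under exactly the same assumptions, gives
\[
\liminf_{n\to\infty} R^* \ge 2\Phi(-u^*/2).
\]

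\textbf{Step 3 (conclusion).} Combining the two steps yields
\[
2\Phi(-u^*/2) \le \liminf_{n\to\infty} R^* \le \limsup_{n\to\infty} R^* \le 2\Phi(-u^*/2).
\]
Hence the limit exists and equals $2\Phi(-u^*/2)$, which is \eqref{eq:minimax_risk_equality}.

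The only point needing care is Step 1. I would confirm that the upper bound of \cite{Kipnis2025minimax}, derived for the Poissonized model, transfers to the multinomial minimax risk $R^*$ as defined in \eqref{eq:minimax_def}, with the same definition of $u_n$ in \eqref{eq:SNR_def}. The paper asserts this transfer in the text preceding Theorem~\ref{thm:main}, so I take it as given.
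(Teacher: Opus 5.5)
Your proposal is correct and matches the paper's argument exactly: the corollary follows by sandwiching $R^*$ between the $\liminf$ lower bound of Theorem~\ref{thm:main} and the $\limsup$ upper bound \eqref{eq:upper_bound}, whose hypotheses ($u_n\to u^*$, $N=o(n^2)$) are contained in those of the theorem. Like the paper, you take the transfer of the Poissonized upper bound to the multinomial $R^*$ as given.
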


For the proof of Theorem~\ref{thm:main}, we analyze the likelihood ratio test associated with a least-favorable prior. For a real sequence $w := (w_0,w_1,\ldots)$, define the statistic
\begin{align}
    T(w) := \sum_{i=1}^N w_{O_i},
    \label{eq:test_statistic}
\end{align}
and the weights
\begin{align}
    \label{eq:wm_star_equivalent}
    w_m^* \propto (m-n/N)^2 - m,\quad m=0,1,\ldots,
\end{align}
which arise as the large-$N$ approximation to the likelihood ratio weights (Lemma~\ref{lem:unconditioned_LR_test} and Lemma~\ref{lem:weight_approximation}).

A key component in the proof of Theorem~\ref{thm:main} is the following conditional central limit theorem for the statistic $T(w^*)$ under a Poisson mixture sampling model.
\begin{thm}
    \label{thm:CLT}
        For each $n$, let $N=N_n$ and let
        $\tilde O_1,\ldots,\tilde O_N$ be independent and identically distributed according to a Poisson mixture law
        \begin{align}
            \label{eq:Poisson_mixture_model}
        Q_i\sim \pi_1,\qquad
        \tilde O_i\mid Q_i\sim \Pois(nQ_i),
        \qquad i=1,\ldots,N,
        \end{align}
        with all pairs $(Q_i, \tilde O_i)$ independent. Let $w =(w_0,w_1,\ldots)$ be a real sequence and define
        \[
        \tilde T_N(w)=\sum_{i=1}^N w_{\tilde O_i},
        \qquad
        S_N=\sum_{i=1}^N \tilde{O}_i.
        \]
        Let $U_N(w)$ have the law of $\tilde T_N(w)$ conditioned on the event
        \begin{align}
        \label{eq:Scal_def}
        \Scal_n := \{S_N=n\}    
        \end{align}
        Assume that, as $n\to\infty$, $N = N_n \to \infty$, $N=o(n^2)$ and the following. 
        \begin{enumerate}    
        \item[(i)] $\Corr{\tilde O_1,w_{\tilde O_1}}=o(1)$.
        
        \item[(ii)] For $Q_1 \sim \pi_1$, $\ex{Q_1}=\frac{1}{N}$ and 
        there is a universal constant $0<C<\infty$ such that the support of $\pi_1$ is in the interval $\left[0,\frac{C}{n}\right]$.
        
        \item[(iii)] We have $\Var{w_{\tilde{O}_1}}>0$ and
        $\ex{|w_{\tilde{O}_1}|^3}<\infty$ for every $n$, and the Lyapunov condition:
        \begin{align*}
        \frac{\ex{\left|w_{\tilde O_1}
        -\ex{w_{\tilde O_1}}\right|^3}}
        {{\sqrt N}\Var{w_{\tilde O_1}}^{3/2}} = o(1).
        \end{align*}
        \end{enumerate}
        Then
        \begin{align*}
        \sup_{x\in\reals} \left| \Prp{ \frac{U_N(w)-\ex{\tilde T_N(w)}}
        {\sqrt{\Var{\tilde T_N(w)}}} \le x } - \Phi(x) \right| \longrightarrow 0.
        \end{align*}
\end{thm}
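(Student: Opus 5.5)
The plan is to use the classical conditional-CLT route: a bivariate local limit argument for the pair $(\tilde T_N(w), S_N)$, combined with the fact that the two coordinates are asymptotically uncorrelated by (i). Write $X_i := w_{\tilde O_i}$, $\mu_X=\ex{X_1}$, $\sigma_X^2=\Var{X_1}$. By (ii), $\ex{\tilde O_1}=n\ex{Q_1}=n/N$ and $\Var{\tilde O_1}=n/N+n^2\Var{Q_1}$. Since $nQ_1\le C$, this variance is of order $n/N$ (bounded above by $(1+C)n/N$, bounded below by $n/N$). Hence $S_N$ has mean exactly $n$ and variance $\sigma_S^2\asymp n$. The conditioning event $\Scal_n$ is therefore the central lattice point of $S_N$.

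The first step is a local CLT for $S_N$: $\Prp{S_N=n}\sim (2\pi \sigma_S^2)^{-1/2}$. This follows from a standard lattice local limit theorem for triangular arrays. Third moments are controlled because $\tilde O_1$ is a Poisson mixture with bounded mean $nQ_1\le C$, so all moments of $\tilde O_1-n/N$ are bounded by constant multiples of $n/N$ (the regime $n=O(N)$ makes these small individually, while $N\cdot n/N=n\to\infty$). Aperiodicity is automatic because the Poisson mixture charges $0$ and $1$ with comparable probability.

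The second step is the joint statement. For $x\in\reals$ I will show
\begin{align*}
\Prp{\tilde T_N\le \mu_T + x\sigma_T,\ S_N=n} = \Phi(x)\,\Prp{S_N=n}+o(n^{-1/2})
\end{align*}
uniformly in $x$, where $\mu_T=N\mu_X$ and $\sigma_T^2=N\sigma_X^2$. Dividing by $\Prp{S_N=n}$ then gives the claim, and Pólya's theorem upgrades pointwise convergence to uniform convergence. To prove the joint statement I use Fourier inversion in the lattice variable:
\begin{align*}
\ex{e^{is\tilde T_N/\sigma_T}\one\{S_N=n\}}=\frac{1}{2\pi}\int_{-\pi}^{\pi}\phi(s/\sigma_T,\theta)^N e^{-i\theta n}\dd\theta,
\end{align*}
where $\phi$ is the joint characteristic function of $(X_1,\tilde O_1)$. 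After substituting $\theta=t/\sigma_S$, I expand $\log\phi$ to second order around the origin. The cross term is $-st\,\Corr{X_1,\tilde O_1}$, which is $o(1)$ by (i). The remainder is controlled by the normalized third moments: the Lyapunov ratio of (iii) for $X$, and $\ex{|\tilde O_1-n/N|^3}/(\sqrt N (n/N)^{3/2})\asymp (N/n)^{1/2}/\sqrt N = n^{-1/2}\to0$ for the count, with a mixed Hölder bound for the cross terms. On $|t|\le \delta\sigma_S$ this yields convergence of $\sigma_S\cdot(\text{integral})$ to $(2\pi)^{-1/2}e^{-s^2/2}$, and I will add a dominated bound $|\phi|^N\le e^{-ct^2}$ there. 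This shows that the conditional characteristic function of $(\tilde T_N-\mu_T)/\sigma_T$ tends to $e^{-s^2/2}$ for each fixed $s$, which by Lévy continuity gives weak convergence.

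The main obstacle is the tail region $\delta\sigma_S\le |t|\le \pi\sigma_S$, that is, $|\theta|\ge\delta$. There I need $N\log|\phi(s/\sigma_T,\theta)|\le -c\,n$ uniformly, so that the contribution is $o(n^{-1/2})$. My first attempt is the bound $|\phi(\cdot,\theta)|\le 1-\Prp{\tilde O_1\in\{0,1\}}\cdot c(\theta)\cdot(n/N)$. This is a smoothing estimate: the lattice part contributes $1-\Prp{\tilde O_1=1}(1-\cos\theta)$ after conditioning on $X$ through $\tilde O_1$ itself, since $X_1$ is a function of $\tilde O_1$. Concretely, $\phi(r,\theta)=\sum_m p_m e^{irw_m+i\theta m}$, and the terms with $m=0,1$, of masses $\approx 1-O(n/N)$ and $\asymp n/N$, already give $|\phi|\le 1-c(n/N)(1-\cos(\theta+r(w_1-w_0)))$. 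This may fail for bad $r$, so I will fall back on also using $m=2$ (mass $\asymp (n/N)^2$). If that is too weak, I will restrict to fixed $s$, where $r=s/\sigma_T\to0$ only if $\sigma_T$ grows relative to $w_1-w_0$. I expect this step to need the most care. The condition $N=o(n^2)$ enters precisely here, ensuring $(n/N)^2N=n^2/N\to\infty$.
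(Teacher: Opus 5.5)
Your architecture is the paper's: Fourier inversion in the lattice variable $S_N$ for the pair $(\tilde T_N,S_N)$, and a second-order expansion near the origin in which (i) removes the cross term. The remainder there is controlled by (iii) together with the $O(n^{-1/2})$ Lyapunov ratio of the counts. You add a Gaussian domination bound on an intermediate range, then apply L\'evy and P\'olya. The case $s=0$ gives $\sigma_S\Prp{S_N=n}\to(2\pi)^{-1/2}$, so your Step 1 is subsumed. One correction: with $p_m:=\Prp{\tilde O_1=m}$, the bounds $p_0\ge e^{-C}$ and $p_1\ge e^{-C}n/N$ are \emph{not} comparable when $n=o(N)$. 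What drives the lattice estimate is $Np_0p_1\gtrsim n\to\infty$, not per-summand aperiodicity.

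\textbf{The gap: the tail region $\delta\le|\theta|\le\pi$ is left open, and your fallback fails.} The atom $m=2$ is only guaranteed mass of order $(n/N)^2$, so at best $|\phi|^N\le e^{-cn^2/N}$. Under $N=o(n^2)$ alone this need not be $o(n^{-1/2})$; take $n^2/N=\log\log n$. Nor does it cure a phase coincidence: the pair $(0,2)$ gives no smoothing near $\theta=\pm\pi$. So $N=o(n^2)$ does not enter the way you describe.

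\textbf{The missing observation: for fixed $s$ there is no bad $r$.} Since $\Var{w_{\tilde O_1}}=\frac12\sum_{m,m'}p_mp_{m'}(w_m-w_{m'})^2\ge p_0p_1(w_1-w_0)^2$,
\[
|r(w_1-w_0)|=\frac{|s|\,|w_1-w_0|}{\sigma_T}\le\frac{|s|}{\sqrt{Np_0p_1}}\le\frac{e^{C}|s|}{\sqrt n}\longrightarrow 0 .
\]
Hence, for large $n$, $|\theta+r(w_1-w_0)|\in[\delta/2,\pi+\delta/2]$ throughout the tail. Your own two-atom bound then gives
\[
|\phi(r,\theta)|\le 1-p_0p_1\bigl(1-\cos(\theta+r(w_1-w_0))\bigr)\le 1-e^{-2C}\bigl(1-\cos(\delta/2)\bigr)\frac nN .
\]
The tail therefore contributes $O(\sqrt n\,e^{-c_\delta n})\to0$.

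\textbf{How the paper closes this region.} It splits off the weight:
\[
|\phi(r,\theta)|\le|\ex{e^{i\theta\tilde O_1}}|+\ex{\bigl|e^{ir(w_{\tilde O_1}-\ex{w_{\tilde O_1}})}-1\bigr|} .
\]
The first term is bounded by $\ex{e^{-\Lambda(1-\cos\theta)}}\le 1-\kappa n/N$, using $\Lambda=nQ_1\in[0,C]$ and concavity. The second is bounded by $|r|\sqrt{\Var{w_{\tilde O_1}}}=|s|/\sqrt N$. Requiring $|s|/\sqrt N=o(n/N)$ is exactly where $N=o(n^2)$ is used. Your completed two-atom bound does not need that hypothesis in the tail, so once the variance inequality is supplied your route is slightly sharper than the paper's.
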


\section{Preliminaries}
\label{sec:preliminaries}
We first provide several definitions and preliminary results to facilitate the proof of the main results in Section~\ref{sec:proofs} below.

\subsection{Poissonized and Poisson mixture models}
The associated Poissonized problem \eqref{eq:hyp_pois}
corresponds to the case where each category's count obeys a Poisson distribution with parameter $n q_i$, independently for $i=1,\ldots,N$ \cite{morris1975central,jacquet1998analytical,aldous2013probability}. 

For a vector $\mathbf{q} = (q_1,\ldots,q_N)$ with non-negative entries, consider the Poisson sampling probability law $\tilde{H}(\mathbf{q})$ of \eqref{eq:hyp_pois}. 
We also define the following set of $N$-length rate sequences 
\begin{align*}
    \tilde{A}_N(\epsilon,p) := \left\{ \tilde{\mathbf{q}} \in {[0,\infty)}^N \, :\, \left\| \tilde{\mathbf{q}} - \mathbf{q}_{\unif} \right\|_p \geq \epsilon \right\}.
\end{align*}
Apart from the unit sum constraint, this set is identical to $A_N(\epsilon,p)$ of \eqref{eq:alt_hyp_def}. For future use, we define the null Poisson mixture model as
\begin{align*}
    \tilde{H}_0 := \tilde{H}(\mathbf{q}_{\unif}).
\end{align*}

The minimax risk under standard Poisson sampling is defined analogously to $R^*$ of \eqref{eq:minimax_def} by 
\begin{align}
\label{eq:minimax_tilde_def}
\tilde{R}(\tilde{A}_N(\epsilon,p)) := \inf_{\psi} \left( \Prp{\psi=1 \mid \tilde{H}_0} + \sup_{q \in \tilde{A}_N(\epsilon,p)} \Prp{\psi = 0 \mid \tilde{H}(q)} \right).
\end{align}
We additionally define the minimax risk under Poisson sampling \emph{conditioned} on the event $\Scal_n := \{S_N=n\}$ by 
\begin{align}
    \label{eq:minimax_tilde_conditioned_def}
& \tilde{R}^*\mid \Scal_n := \tilde{R}^*(\tilde{A}_N(\epsilon,p) \mid \Scal_n) \\
& := \inf_{\psi} \left( \Prp{\psi=1 \mid \tilde{H}(\mathbf{q}_{\unif} \mid \Scal_n)} + \sup_{\tilde{\mathbf{q}} \in \tilde{A}_N(\epsilon,p)} \Prp{\psi = 0 \mid \tilde{H}(\tilde{\mathbf{q}} \mid \Scal_n)} \right). \nonumber
\end{align}

\subsection{Reduction to a Bayesian setting}
For a prior $\pi$, denote by $\tilde{H}(\pi)$ the Poisson mixture probability model 
\begin{align*}
    \tilde{H}(\pi) \,:\,Q_i \sim \pi_i, \quad \tilde{O}_i \sim \Pois(n Q_i),\quad \forall i=1,\ldots,N,\,\mbox{\rmfamily independently}. 
\end{align*}
The Bayes risk under $\pi \in \Pi_{N}$ and a test $\psi = \psi(o_1,\ldots,o_N)$ in the Poisson sampling setting is defined as
\[
\tilde{\rho}(\pi; \psi) := \Prp{\psi = 1 \mid \tilde{H}(\mathbf{q}_{\unif})} + \Prp{\psi = 0 \mid \tilde{H}(\pi)}.
\]
Denote by $\tilde{H}(\mathbf{q} \mid \Scal_n)$ the law $\tilde{H}(\mathbf{q})$ conditioned on the event $\{S_N=n\}$. We define the conditioned Bayes risk $\tilde{\rho}(\pi; \psi \mid \Scal_n)$ by
\begin{align}
    \label{eq:rho_tilde_pi_cond_def}
    \tilde{\rho}(\pi; \psi \mid \Scal_n) :=  
    \Prp{\psi = 1 \mid \tilde{H}(\mathbf{q}_{\unif} \mid \Scal_n)} + \Prp{\psi = 0 \mid \tilde{H}(\pi \mid \Scal_n)}. 
\end{align}
Notice that $\tilde{H}(\pi \mid \Scal_n)$ is not a multinomial law because of the randomness in $\mathbf{Q}$. We also define 
\[
\tilde{\rho}^*(\pi \mid \Scal_n) := \inf_{\psi} \tilde{\rho}(\pi; \psi \mid \Scal_n).
\]

Define a set of $N$-dimensional product priors with support in $[0,1]^N$ that satisfy a moment condition associated with the $\epsilon$ separation:
\begin{align}
    \Pi_N := \left\{ \pi= \prod_{i=1}^N \pi_i \,:\, \exsub{\mathbf{Q} \sim \pi}{\left\|\mathbf{Q} - \mathbf{q}_{\unif} \right\|_p^p } \geq \epsilon^p,\, \pi_i([0,1])=1 \right\}.
\end{align}
The following lemma states that for $\pi \in \Pi_N$, the minimax risk is bounded from below by the Bayes risk. The proof, provided in Appendix~\ref{sec:proof_bayes_equiv}, uses the law of large numbers applied to a sequence of independent random variables $Q_i \sim \pi_i$.
\begin{lem}
\label{lem:Bayes_equivalence}
    Fix $\pi \in \Pi_N$. As $N \to \infty$, 
    \[
        \tilde{R}^* \mid \Scal_n \geq \tilde{\rho}^*(\pi \mid \Scal_n) + o(1)
    \]
    uniformly in $\epsilon$. 
\end{lem}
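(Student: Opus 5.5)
The plan is the standard "prior concentrates on the alternative" argument, carried out under the conditioned Poisson law. Fix a test $\psi$ and $\delta>0$. Define the $\delta$-shrunk separation event
\[
E_\delta:=\Bigl\{\mathbf{Q}\,:\,\|\mathbf{Q}-\mathbf{q}_{\unif}\|_p\ge (1-\delta)\epsilon\Bigr\},
\]
and note that $\tilde{\mathbf{q}}\in\tilde A_N((1-\delta)\epsilon,p)$ whenever $\mathbf{Q}=\tilde{\mathbf{q}}\in E_\delta$. Writing $\tilde H(\pi\mid\Scal_n)$ as a mixture over $\mathbf{Q}$ of the laws $\tilde H(\mathbf{Q}\mid\Scal_n)$, we have
\[
\Prp{\psi=0\mid \tilde H(\pi\mid\Scal_n)}=\exsub{\mathbf{Q}\mid \Scal_n}{\Prp{\psi=0\mid \tilde H(\mathbf{Q}\mid\Scal_n)}}
\le \sup_{\tilde{\mathbf q}\in\tilde A_N((1-\delta)\epsilon,p)}\Prp{\psi=0\mid\tilde H(\tilde{\mathbf q}\mid\Scal_n)}+\Prp{\mathbf{Q}\notin E_\delta\mid \Scal_n}.
\]
Here $\mathbf{Q}\mid\Scal_n$ denotes the posterior of $\mathbf{Q}$ given $S_N=n$. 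Adding the type I error and taking the infimum over $\psi$ gives $\tilde\rho^*(\pi\mid\Scal_n)\le \tilde R^*(\tilde A_N((1-\delta)\epsilon,p)\mid\Scal_n)+\Prp{E_\delta^c\mid\Scal_n}$. The rescaling $(1-\delta)$ is absorbed by letting $\delta\downarrow0$ at the end, since the paper only uses this through limits in $u_n$. Alternatively, one can replace $\pi$ by a slightly inflated prior in $\Pi_N$ at level $\epsilon/(1-\delta)$. "Uniformly in $\epsilon$" then refers to the bound on the error term below, which does not depend on $\epsilon$.

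The key step is to show $\Prp{E_\delta^c\mid\Scal_n}\to0$. Unconditionally, $\|\mathbf{Q}-\mathbf{q}_{\unif}\|_p^p=\sum_i |Q_i-1/N|^p$ is a sum of $N$ independent terms, and its mean is at least $\epsilon^p$ because $\pi\in\Pi_N$. A weak law of large numbers should therefore give $\Prp{\sum_i|Q_i-1/N|^p<(1-\delta)^p\epsilon^p}\to0$. To justify it I would use Chebyshev with bounded support: with support in $[0,C/n]$ (as in Theorem~\ref{thm:CLT}(ii)) and the prior chosen so that $\sum_i\mathbb{E}|Q_i-1/N|^p$ is comparable to $\epsilon^p$, each term has variance bounded by its mean times $\max_i$ of the term, so the relative variance is $O(1/N)$ in the near-identical case.

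The conditioning is the main obstacle. The event $\Scal_n$ has probability of order $N^{-1/2}$ under the mixture, because $S_N$ is Poisson-mixture distributed with mean $n$ and standard deviation $\asymp\sqrt n$. Hence a crude bound $\Prp{E_\delta^c\mid\Scal_n}\le\Prp{E_\delta^c}/\Prp{\Scal_n}$ needs $\Prp{E_\delta^c}=o(n^{-1/2})$. I would obtain this with an exponential (Bernstein/Hoeffding) bound on $\sum_i|Q_i-1/N|^p$, using bounded summands, giving decay like $\exp(-cN\delta^2)$. Combined with a local limit lower bound $\Prp{S_N=n}\gtrsim 1/\sqrt{n}$, valid since $S_N$ is a sum of i.i.d. lattice variables with variance $\asymp n/N$ each, this makes the ratio vanish. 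The quantity $\Prp{\Scal_n}$ does not depend on $\epsilon$, and the Bernstein bound depends only on $N$ and $\delta$, so the resulting $o(1)$ is uniform in $\epsilon$.

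Finally, $\tilde R^*\mid\Scal_n$ should be related to the multinomial $R^*$. For deterministic $\tilde{\mathbf q}$, $\tilde H(\tilde{\mathbf q}\mid\Scal_n)=\Multi(n,\tilde{\mathbf q}/\|\tilde{\mathbf q}\|_1)$, and the normalization changes $\epsilon$ only negligibly on the relevant set. That bookkeeping belongs to the later reduction rather than to this lemma.
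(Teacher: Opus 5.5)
Your argument is sound for the priors it actually covers, but it takes a different route from the paper's.

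\textbf{The paper's route.} The paper applies a law of large numbers to $\sum_i|Q_i-1/N|^p$, justified only by uniformly bounded variances of the summands. It then passes to a prior $\bar\pi$ restricted to $\tilde{A}_N(\epsilon_N,p)$ and conditioned on $\Scal_n$. It deduces $\mathrm{TV}(P_\pi,P_{\bar\pi})\to0$ from \emph{unconditional} concentration; its closing remark asserts that conditioning on $\Scal_n$ plays no essential role. Finally, it returns from $\epsilon_N$ to $\epsilon$ through an asserted, unproved continuity of $\tilde{R}^*$ in $\epsilon$.

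\textbf{Your route.} You write $\tilde{H}(\pi\mid\Scal_n)$ as a mixture over the posterior of $\mathbf{Q}$ given $\Scal_n$. You keep a $(1-\delta)$ slack in place of continuity. You bound the posterior mass of $E_\delta^c$ by $\Prp{E_\delta^c}/\Prp{\Scal_n}$, pitting an exponential tail against the local-limit bound $\Prp{\Scal_n}\gtrsim n^{-1/2}$.

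\textbf{What each buys.} Your route is more careful exactly where the paper is thin. Since $\Scal_n$ is a rare event, unconditional concentration of $\mathbf{Q}$ does not by itself make $\Prp{E^c\mid\Scal_n}$ small. Also, bounded variances give no relative concentration at the vanishing scale $\epsilon^p$. The paper's display $\frac{\epsilon^{-p}}{N}\sum_i\mathbb{E}|Q_i-1/N|^p\ge1$ is moreover off by a factor $N$ relative to the definition of $\Pi_N$. Your requirement that $|Q_i-1/N|^p=O(\epsilon^p/N)$ almost surely is the right one. Make sure it, and not the support bound $(C/n)^p$, is the range used in the exponential inequality. When $u_n\to u^*$ the support bound is larger by a factor of order at least $N^{p/4}$, and a Hoeffding bound based on it does not beat $n^{-1/2}$ for $p\ge2$. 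The price of your route is the extra hypotheses: i.i.d.\ coordinates, $\mathbb{E}Q_i=1/N$, support in $[0,C/n]$, and a common scale for $|Q_i-1/N|^p$. You also reach a conclusion at level $(1-\delta)\epsilon$ rather than $\epsilon$.

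\textbf{The hypotheses are necessary.} They cannot simply be dropped, because the lemma is false for arbitrary $\pi\in\Pi_N$. Take $\pi_1=(1-\eta)\delta_{1/N}+\eta\,\delta_1$ with $\eta=\epsilon^p(1-1/N)^{-p}$, and $\pi_i=\delta_{1/N}$ for $i\ge2$. Then $\pi\in\Pi_N$. Since $\lambda\mapsto e^{-\lambda}\lambda^n$ is maximized at $\lambda=n$, conditioning on $\Scal_n$ can only increase the posterior weight of $\mathbf{q}_{\unif}$, so $\tilde{\rho}^*(\pi\mid\Scal_n)\ge1-\eta\to1$. Yet $\tilde{R}^*\mid\Scal_n=R^*\to0$ whenever $u_n\to\infty$ with $\epsilon\to0$ (e.g.\ $p=2$, $n=N$, $\epsilon=N^{-3/4}\log N$). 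So state your concentration condition as an explicit hypothesis on $\pi$, instead of writing ``the prior chosen so that''.

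\textbf{The prior actually used.} For $\pi^*$, the only prior the paper uses, every realization satisfies $\|\mathbf{Q}-\mathbf{q}_{\unif}\|_p=\epsilon$. Your first display with $\delta=0$ therefore already yields $\tilde{\rho}^*(\pi^*\mid\Scal_n)\le\tilde{R}^*\mid\Scal_n$ exactly. No law of large numbers, local limit theorem, slack, or continuity argument is needed.
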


Finally, we define a class of tests $\psi_{w,\tau}$ using sequences $(w) = (w_0,w_1,\ldots)$, test statistic $T(w)$ as in \eqref{eq:test_statistic}, and a threshold $\tau \in \reals$, by rejecting $H_0$ when 
\begin{align}
\label{eq:linear_test}
    T(w) \geq \tau.
\end{align}

The notation above follows a general convention that we adopt henceforth: random variables and expressions associated with data sampled from the Poisson distribution are decorated with a tilde; $\rho$ denotes Bayes risk; and expressions involving an optimization, such as an infimum or supremum, are decorated with an asterisk.

\subsection{Least favorable prior candidate}
For the lower bound on the minimax risk, we put a prior on the alternative and bound the minimax risk below by the corresponding Bayes risk. The prior used here is the product prior
\begin{align}
    \label{eq:least_favorable_prior}
    \pi^* = \prod_{i=1}^N \pi_i^*, \quad \text{where} \quad \pi_i^* = \frac{1}{2}\delta_{\frac{1}{N} - \epsilon N^{-1/p}} + \frac{1}{2}\delta_{\frac{1}{N} + \epsilon N^{-1/p}},
\end{align}
where $\delta_x$ is a point mass at $x$. Namely, the prior consists of local two-sided perturbations of the uniform distribution. We then analyze the likelihood ratio test under the mixed multinomial model
\begin{align}
\label{eq:mixed_multinomial_model}
Q \sim \pi^*,\qquad 
(O_1,\ldots,O_N) \mid Q \sim \Multi(n,Q/\|Q\|_1),
\end{align}
whose Bayes risk bounds the asymptotic minimax risk $R^*$ from below.

\subsection{Conditioning and likelihood ratios}

The following lemma is a well-known equivalence between the laws of conditioned Poisson distribution and the multinomial distribution (cf.\ \cite{aldous2013probability,barbour1992poisson}).
\begin{lem}
\label{lem:dist_equivalence}
    For every $\tilde{\mathbf{q}} \in [0,\infty)^N \setminus \{(0,\ldots,0)\}$, $\tilde{H}(\tilde{\mathbf{q}} \mid \Scal_n) = H(\tilde{\mathbf{q}}/\|\tilde{\mathbf{q}}\|_1)$.
\end{lem}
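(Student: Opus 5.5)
The plan is a direct computation of the conditional probability mass function. Fix $\tilde{\mathbf{q}} \in [0,\infty)^N\setminus\{(0,\ldots,0)\}$ and write $\lambda := \|\tilde{\mathbf{q}}\|_1 > 0$. Under $\tilde{H}(\tilde{\mathbf{q}})$ the counts $\tilde O_1,\ldots,\tilde O_N$ are independent with $\tilde O_i \sim \Pois(n\tilde q_i)$. Coordinates with $\tilde q_i = 0$ are handled by the convention $\Pois(0)=\delta_0$, which matches the multinomial convention that a zero-probability category has count $0$ almost surely.

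\textbf{Step 1: the law of the total count.} By the additivity of independent Poisson variables, $S_N=\sum_i \tilde O_i \sim \Pois(n\lambda)$. Hence
\begin{align*}
\Prp{\Scal_n} = e^{-n\lambda}(n\lambda)^n/n! > 0,
\end{align*}
so conditioning on $\Scal_n$ is well defined. Positivity holds because $\lambda>0$.

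\textbf{Step 2: the conditional pmf.} Take $(o_1,\ldots,o_N)\in\mathbb{Z}_{\ge 0}^N$ with $\sum_i o_i = n$. The event $\{\tilde O_i = o_i\ \forall i\}$ is contained in $\Scal_n$, so
\begin{align*}
\Prp{\tilde O_i=o_i\ \forall i \mid \Scal_n}
= \frac{\prod_{i=1}^N e^{-n\tilde q_i}(n\tilde q_i)^{o_i}/o_i!}{e^{-n\lambda}(n\lambda)^n/n!}.
\end{align*}
In the numerator, the exponentials multiply to $e^{-n\lambda}$ and the powers of $n$ multiply to $n^{\sum_i o_i}=n^n$. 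Both cancel against the denominator, leaving
\begin{align*}
\frac{n!}{o_1!\cdots o_N!}\prod_{i=1}^N \left(\frac{\tilde q_i}{\lambda}\right)^{o_i}.
\end{align*}
This is exactly the $\Multi(n,\tilde{\mathbf{q}}/\lambda)$ pmf. Vectors with $\sum_i o_i \neq n$ have conditional probability zero under the conditioned law, and they also have probability zero under the multinomial law. The two laws therefore agree on all of $\mathbb{Z}_{\ge0}^N$. By Lemma-free definition of $H(\cdot)$ in \eqref{eq:hyp_Q}, this gives $\tilde{H}(\tilde{\mathbf{q}}\mid\Scal_n)=H(\tilde{\mathbf{q}}/\|\tilde{\mathbf{q}}\|_1)$.

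\textbf{Main obstacle.} There is no real obstacle. The only points needing care are the cancellation of the factor $n^n$, which uses the constraint $\sum_i o_i=n$, and the degenerate coordinates with $\tilde q_i=0$. In those coordinates, both sides force $o_i=0$, and the factor $0^0=1$ keeps the formula consistent.
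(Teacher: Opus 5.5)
Your proof is correct: the conditional pmf computation, cancelling $e^{-n\lambda}n^n$ and using $\lambda^n=\prod_i \lambda^{o_i}$ via $\sum_i o_i = n$, is the standard argument, and your handling of zero coordinates through $\Pois(0)=\delta_0$ and $0^0=1$ is right. The paper gives no proof of its own and only cites this as a well-known fact, so your computation supplies exactly the argument that the citation stands in for.
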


The following lemma states that the equivalence represented in Lemma~\ref{lem:dist_equivalence} holds for the conditional minimax risk defined in \eqref{eq:minimax_tilde_conditioned_def}.
\begin{lem}
\label{lem:risk_equivalence}
    For any $\epsilon$, $p$, $N$, and $n$, 
    \[
    R^* = \tilde{R}^*\mid \Scal_n.
    \]
\end{lem}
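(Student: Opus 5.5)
Lemma~\ref{lem:risk_equivalence} says $R^* = \tilde{R}^*\mid \Scal_n$. I would prove it by showing that the two infima in \eqref{eq:minimax_def} and \eqref{eq:minimax_tilde_conditioned_def} are taken over the same objects. The tests are maps $\psi$ on count vectors $(o_1,\ldots,o_N)$ in both problems, so the class of tests coincides. It therefore suffices to check, for every fixed test $\psi$, two things: the null terms agree, and the suprema over the alternative classes agree.

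\emph{Null term.} Lemma~\ref{lem:dist_equivalence} with $\tilde{\mathbf{q}}=\mathbf{q}_{\unif}$ gives $\tilde{H}(\mathbf{q}_{\unif}\mid\Scal_n)=H(\mathbf{q}_{\unif})$, since $\|\mathbf{q}_{\unif}\|_1=1$. Hence the type-I error probabilities are identical.

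\emph{Alternative term.} I would show
\[
\sup_{\tilde{\mathbf{q}}\in\tilde A_N(\epsilon,p)}\Prp{\psi=0\mid \tilde H(\tilde{\mathbf{q}}\mid\Scal_n)} = \sup_{\mathbf{q}\in A_N(\epsilon,p)}\Prp{\psi=0\mid H(\mathbf{q})}.
\]
The inequality ``$\ge$'' is immediate. Every $\mathbf{q}\in A_N(\epsilon,p)$ also lies in $\tilde A_N(\epsilon,p)$, and since $\|\mathbf{q}\|_1=1$, Lemma~\ref{lem:dist_equivalence} gives $\tilde H(\mathbf{q}\mid\Scal_n)=H(\mathbf{q})$.

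For ``$\le$'', take any $\tilde{\mathbf{q}}\in\tilde A_N(\epsilon,p)$. It is nonzero, since otherwise $\|\tilde{\mathbf{q}}-\mathbf{q}_{\unif}\|_p=N^{1/p-1}$. By Lemma~\ref{lem:dist_equivalence}, $\tilde H(\tilde{\mathbf{q}}\mid\Scal_n)=H(\mathbf{q}')$ with $\mathbf{q}'=\tilde{\mathbf{q}}/\|\tilde{\mathbf{q}}\|_1$. So the task is to show that each conditioned Poisson alternative is matched by some multinomial alternative, i.e.\ that $\mathbf{q}'\in A_N(\epsilon,p)$.

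\textbf{Main obstacle.} The difficulty is that $\tilde A_N$ is defined without the sum constraint. Normalizing can move a point closer to uniform; for instance, $\tilde{\mathbf{q}}=c\,\mathbf{q}_{\unif}$ with $c\neq1$ satisfies $\|\tilde{\mathbf{q}}-\mathbf{q}_{\unif}\|_p\ge\epsilon$ for suitable $c$, yet normalizes to $\mathbf{q}_{\unif}$ itself. Taken literally, the left-hand supremum could then include the null law, and the identity would fail. I would resolve this in one of two ways.

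\begin{itemize}
\item \emph{Reading $\tilde A_N$ as a set of normalized directions.} Interpret the conditioned supremum over $\tilde A_N$ as ranging over vectors whose normalization lies in $A_N$. This is what the later use via $\pi^*$ and \eqref{eq:mixed_multinomial_model} actually needs: the realizations of $Q\sim\pi^*$ are only approximately normalized. With this reading the two families of laws $\{\tilde H(\tilde{\mathbf{q}}\mid\Scal_n)\}$ and $\{H(\mathbf{q})\}$ coincide as sets, because $\mathbf{q}\mapsto\mathbf{q}$ and $\tilde{\mathbf{q}}\mapsto\tilde{\mathbf{q}}/\|\tilde{\mathbf{q}}\|_1$ map each family onto the other.
\item \emph{Restricting to the unit simplex.} Alternatively, note that since the laws depend only on $\tilde{\mathbf{q}}/\|\tilde{\mathbf{q}}\|_1$, the definition may be restricted to $\|\tilde{\mathbf{q}}\|_1=1$ without loss.
\end{itemize}

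I would state explicitly whichever convention is adopted. Under it, the suprema agree for every $\psi$, and taking the infimum over $\psi$ yields $R^*=\tilde R^*\mid\Scal_n$.
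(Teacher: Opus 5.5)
Your argument has the same skeleton as the paper's proof. The null terms agree by Lemma~\ref{lem:dist_equivalence}, the inclusion $A_N(\epsilon,p)\subset\tilde{A}_N(\epsilon,p)$ gives one inequality between the suprema, and normalization $\tilde{\mathbf{q}}\mapsto\tilde{\mathbf{q}}/\|\tilde{\mathbf{q}}\|_1$ is supposed to give the other. The paper closes that last step by simply asserting $\tilde{\mathbf{q}}^{(k)}/\|\tilde{\mathbf{q}}^{(k)}\|_1\in A_N(\epsilon,p)$, and your ``main obstacle'' correctly shows this is false. Your example settles the literal statement. For $c\ge 1+\epsilon N^{1-1/p}$ one has $\|c\,\mathbf{q}_{\unif}-\mathbf{q}_{\unif}\|_p=(c-1)N^{1/p-1}\ge\epsilon$ and $\tilde{H}(c\,\mathbf{q}_{\unif}\mid\Scal_n)=H(\mathbf{q}_{\unif})$. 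So every test has conditioned risk at least $\Prp{\psi=1\mid H(\mathbf{q}_{\unif})}+\Prp{\psi=0\mid H(\mathbf{q}_{\unif})}=1$, and $\tilde{R}^*\mid\Scal_n=1$ identically (attained by $\psi\equiv 0$). Meanwhile $R^*<1$ for large $n$ in the intermediate regime by \eqref{eq:upper_bound}. The lemma is therefore false as stated, not merely at risk of failing, and a change of convention like yours is unavoidable. On this point your proposal is more careful than the paper.

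Three corrections to your write-up. First, your second option is not ``without loss''. The constraint $\|\tilde{\mathbf{q}}-\mathbf{q}_{\unif}\|_p\ge\epsilon$ is not scale invariant, so restricting to the simplex changes the value of $\tilde{R}^*\mid\Scal_n$ (from $1$ to $R^*$). It is a redefinition just like the first option, and under either one the lemma becomes a tautology.

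Second, the zero vector is not excluded by the definition. Since $\|0-\mathbf{q}_{\unif}\|_p=N^{1/p-1}\ge\epsilon$ whenever $A_N(\epsilon,p)\neq\emptyset$, we have $0\in\tilde{A}_N(\epsilon,p)$, and $\tilde{H}(0\mid\Scal_n)$ is undefined for $n\ge 1$. It too must be removed by convention; both of your conventions do this automatically.

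Third, the redefinition moves the real content into Lemma~\ref{lem:Bayes_equivalence}. That lemma must now show that, under $\pi^*$ conditioned on $\Scal_n$, the \emph{normalized} vector $\mathbf{Q}/\|\mathbf{Q}\|_1$ lies in $A_N(\epsilon_N,p)$ with $\epsilon_N/\epsilon\to1$ with high probability, rather than $\mathbf{Q}\in\tilde{A}_N$. This is true. We have $\|\mathbf{Q}-\mathbf{q}_{\unif}\|_p=\epsilon$ exactly, and $\|\mathbf{Q}\|_1=1+\eta$ with $\eta=O(\epsilon N^{1/2-1/p})=o(1)$ in probability when $u_n\to u^*$. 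Also $\|\eta\,\mathbf{q}_{\unif}\|_p=|\eta|N^{1/p-1}=O(\epsilon N^{-1/2})$ in probability, so $\|\mathbf{Q}/\|\mathbf{Q}\|_1-\mathbf{q}_{\unif}\|_p=\epsilon(1+o(1))$. Conditioning on $\Scal_n$ reweights $\pi^*$ by a density at most $1+o(1)$, by the computation in the proof of Lemma~\ref{lem:LR_equivalence}. Neither the paper nor your proposal checks this.
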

\begin{proof}
By Lemma~\ref{lem:dist_equivalence}, we have
    \[
    \Prp{\psi = 1 \mid \tilde{H}(\mathbf{q}_{\unif} \mid \Scal_n)} = \Prp{\psi = 1 \mid H(\mathbf{q}_{\unif})}.
    \]
    Additionally, $A_N(\epsilon,p) \subset \tilde{A}_N(\epsilon,p)$ by definition. Hence, from the definitions of the minimax risk in \eqref{eq:minimax_def} and the conditioned minimax risk in \eqref{eq:minimax_tilde_conditioned_def}, it is enough to show that for any fixed test $\psi$, 
    \begin{align}
        \label{eq:lem:risk_equivalence_to_show}
    \sup_{\tilde{\mathbf{q}} \in \tilde{A}_N(\epsilon,p)} \Prp{\psi = 0 \mid \tilde{H}(\tilde{\mathbf{q}})} \leq \sup_{\mathbf{q} \in A_N(\epsilon,p)} \Prp{\psi = 0 \mid H(\mathbf{q})}.
    \end{align}
    Let $\tilde{\mathbf{q}}^{(k)} \in \tilde{A}_N(\epsilon,p)$ be a sequence converging to the supremum on the left-hand side of \eqref{eq:lem:risk_equivalence_to_show}. 
    By Lemma~\ref{lem:dist_equivalence}, 
    \begin{align*}
        & \sup_{\tilde{\mathbf{q}} \in \tilde{A}_N(\epsilon,p)} \Prp{\psi = 0 \mid \tilde{H}(\tilde{\mathbf{q}})} = \lim_{k \to \infty} \Prp{\psi = 0 \mid \tilde{H}(\tilde{\mathbf{q}}^{(k)})} \\
        & \qquad = \lim_{k \to \infty} \Prp{\psi = 0 \mid H(\tilde{\mathbf{q}}^{(k)}/\|\tilde{\mathbf{q}}^{(k)}\|_1)} \leq \sup_{\mathbf{q} \in A_N(\epsilon,p)} \Prp{\psi = 0 \mid H(\mathbf{q})},
    \end{align*}
where the last transition is because $\frac{\tilde{\mathbf{q}}^{(k)}}{\|\tilde{\mathbf{q}}^{(k)}\|_1} \in A_N(\epsilon,p)$.
\end{proof}

The following lemma connects the likelihood ratios in the Poisson law conditioned on $\Scal_n$ and the unconditioned law. The proof is given in Appendix~\ref{sec:proof_LR_equivalence}.
\begin{lem}
\label{lem:LR_equivalence}
For $\epsilon N^{-1/p} \leq 1/N$, consider the distribution $\pi^*$ of \eqref{eq:least_favorable_prior}. Denote by $\tilde{L}$ the likelihood ratio of $\tilde{H}(\pi^*)$ to $\tilde{H}(\mathbf{q}_{\unif})$. Denote by $\tilde{L}\mid \Scal_n$ the likelihood ratio of 
    $\tilde{H}(\pi^* \mid \Scal_n)$ to $\tilde{H}(\mathbf{q}_{\unif} \mid \Scal_n)$. Let $n,N \to \infty$ with $u_n \to u^* \in (0,\infty)$. For every $(o_1,\ldots,o_N) \in \mathbb{N}^N$ with $\sum_{i=1}^N o_i = n$, we have
    \begin{align*}
    \tilde{L} (o_1,\ldots,o_N \mid \Scal_n) = \tilde{L}(o_1,\ldots,o_N)(1 + O(1/\sqrt{N})).
    \end{align*}
\end{lem}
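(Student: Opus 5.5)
We prove Lemma~\ref{lem:LR_equivalence} by writing both likelihood ratios explicitly and comparing them through Bayes' formula. For any law $P$ of $(\tilde O_1,\ldots,\tilde O_N)$ and any $o$ with $\sum_i o_i=n$, we have $P(o\mid \Scal_n)=P(o)/P(S_N=n)$. Therefore
\begin{align*}
\tilde{L}(o\mid \Scal_n)=\tilde{L}(o)\cdot \frac{\Prp{S_N=n \mid \tilde{H}(\mathbf{q}_{\unif})}}{\Prp{S_N=n\mid \tilde{H}(\pi^*)}} .
\end{align*}
The correction factor does not depend on $o$. The lemma thus reduces to showing that the ratio of the two point probabilities of the total count equals $1+O(1/\sqrt N)$.

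Under the null, $S_N\sim\Pois(n)$, so $\Prp{S_N=n}=e^{-n}n^n/n!$.

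Under $\tilde H(\pi^*)$, write $\delta:=\epsilon N^{-1/p}$ and let $K\sim\Bin(N,1/2)$ be the number of coordinates receiving the $+\delta$ perturbation. Conditionally on $\mathbf{Q}$, $S_N\sim\Pois(n\sum_i Q_i)$, and $\sum_i Q_i=1+\delta(2K-N)$. Hence
\begin{align*}
\Prp{S_N=n\mid\tilde H(\pi^*)}=\ex{ \frac{e^{-n\Lambda}(n\Lambda)^n}{n!} },\qquad \Lambda:=1+\delta(2K-N),
\end{align*}
and the ratio we need is
\begin{align*}
\ex{\Lambda^n e^{-n(\Lambda-1)}}=\ex{\exp\big(n(\log\Lambda-\Lambda+1)\big)}.
\end{align*}
The assumption $\delta\le 1/N$ gives $\Lambda\ge 0$, so the expression is well defined.

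Next we control $\Lambda-1$. Write $D:=2K-N$, which has mean $0$ and variance $N$. Then $\Lambda-1=\delta D$, and Taylor expansion gives $\log\Lambda-\Lambda+1=-\tfrac12\delta^2D^2+O(\delta^3|D|^3)$. Typically $D\asymp\sqrt N$, so the exponent is of order $n\delta^2 N$. Using $\delta=\epsilon N^{-1/p}$ and the definition of $u_n$,
\begin{align*}
n\delta^2N=n\epsilon^2N^{1-2/p}=\sqrt2\,u_n N^{-1/2}=O(N^{-1/2}).
\end{align*}
Consequently the exponent is $-\tfrac12 n\delta^2 D^2(1+o(1))$, and $\ex{n\delta^2D^2}=n\delta^2N=O(1/\sqrt N)$. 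Since the exponent is nonpositive, $e^{x}\ge 1+x$ for $x\le 0$ gives a lower bound of the form $1-O(1/\sqrt N)$, and the upper bound $\le 1$ is trivial. The ratio is therefore $1-O(1/\sqrt N)$, which yields the claim (its inverse is also $1+O(1/\sqrt N)$).

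The main technical point is justifying the expansion uniformly, i.e.\ handling atypical $K$ where $\delta|D|$ is not small. I would split on the event $|D|\le N^{3/4}$. On this event $\delta|D|\le N^{-1/4}\to0$, so $\log\Lambda-\Lambda+1\ge -c\,\delta^2D^2$ with a constant $c$. Its complement has probability $e^{-c\sqrt N}$ by Hoeffding's inequality, and because the integrand lies in $[0,1]$ this part is negligible. The lower bound then follows from $\ex{1-c\,n\delta^2D^2}\ge 1-c\sqrt2\,u_nN^{-1/2}$, where $u_n$ is bounded because $u_n\to u^*$.
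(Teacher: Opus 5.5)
Your proof is correct and follows the paper's argument essentially step by step. Bayes' rule reduces the claim to the ratio $\Pr[S_N=n\mid\tilde H(\pi^*)]/\Pr[S_N=n\mid\tilde H(\mathbf{q}_{\unif})]=\ex{\exp(n(\log\Lambda-\Lambda+1))}$, with a $\Bin(N,1/2)$ count of upward perturbations; this is bounded above by $1$ via $\log x\le x-1$ and below via $\log(1+x)\ge x-x^2$ on a truncation event, together with $\ex{n\delta^2D^2}=\sqrt2\,u_nN^{-1/2}$. Your explicit Hoeffding bound on the complement of $\{|D|\le N^{3/4}\}$ is a small improvement: the paper records that piece only as $o(1)$, which by itself would not give the stated $O(1/\sqrt N)$ rate.
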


The following lemma derives the likelihood ratio test in the Poisson setting, showing that it is of the form \eqref{eq:linear_test}. 
\begin{lem}
    \label{lem:unconditioned_LR_test}
    Consider testing the null hypothesis $H_0 = \tilde{H}(\mathbf{q}_{\unif})$ against the alternative $H_1 = \tilde{H}(\pi^*)$, with $\pi^*$ of \eqref{eq:least_favorable_prior}. The likelihood ratio test is of the form $\psi_{w^{\LR},\tau}$ with:
\begin{align}
\label{eq:LR_test_weights}
w^{\LR}_m = \log(g_{m,n/N})\!\left( \epsilon N^{1-1/p} \right),\quad m=0,1,\ldots,
\end{align}
where 
\begin{align}
\label{eq:g_def}
g_{m,\lambda}(s) := \frac{1}{2}e^{-\lambda s} \left( 1 + s \right)^{m} + \frac{1}{2}e^{\lambda s} \left( 1 - s \right)^{m}.
\end{align}
\end{lem}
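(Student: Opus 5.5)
The plan is a direct computation of the likelihood ratio, using the product structure of both $\tilde H(\pi^*)$ and $\tilde H(\mathbf{q}_{\unif})$. Under $\tilde H(\mathbf{q}_{\unif})$ the counts $\tilde O_1,\ldots,\tilde O_N$ are i.i.d.\ $\Pois(n/N)$. Under $\tilde H(\pi^*)$ they are i.i.d.\ mixtures: each $Q_i$ equals $\frac1N(1\pm s)$ with probability $1/2$ each, where
\[
s:=\epsilon N^{1-1/p},
\]
so that $\epsilon N^{-1/p}=s/N$. The standing assumption $\epsilon N^{-1/p}\le 1/N$ gives $s\le 1$, so both atoms are nonnegative and the Poisson laws are well defined. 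Since both laws are products over $i$, the likelihood ratio factorizes as
\[
\tilde L(o_1,\ldots,o_N)=\prod_{i=1}^N \ell(o_i).
\]
Here $\ell(m)$ is the ratio of the one-coordinate mixture probability mass function to the $\Pois(n/N)$ mass function at $m$.

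Next I compute $\ell(m)$ explicitly. Set $\lambda=n/N$. Then
\[
\ell(m)=\frac{\tfrac12 e^{-\lambda(1+s)}\lambda^m(1+s)^m/m!+\tfrac12 e^{-\lambda(1-s)}\lambda^m(1-s)^m/m!}{e^{-\lambda}\lambda^m/m!}.
\]
The factors $\lambda^m/m!$ cancel, and the common factor $e^{-\lambda}$ cancels as well. What remains is
\[
\ell(m)=\tfrac12 e^{-\lambda s}(1+s)^m+\tfrac12 e^{\lambda s}(1-s)^m=g_{m,n/N}(s),
\]
with $g$ as in \eqref{eq:g_def}.

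Then I take logarithms: $\log\tilde L=\sum_i \log g_{\tilde O_i,n/N}(s)=T(w^{\LR})$, with $w^{\LR}$ as in \eqref{eq:LR_test_weights}. This requires $g_{m,\lambda}(s)>0$, which holds because the first summand is strictly positive for $s\in[0,1]$. When $s=1$ and $m\ge1$ the term $(1-s)^m$ vanishes, but this causes no problem. By the Neyman--Pearson lemma, the likelihood ratio test rejects when $\tilde L\ge e^{\tau}$, equivalently when $T(w^{\LR})\ge\tau$. This is exactly $\psi_{w^{\LR},\tau}$ of \eqref{eq:linear_test}.

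There is no real obstacle here. The only points needing care are the cancellation of the $m$-dependent Poisson factors, which is what makes the weights depend on $m$ only through $(1\pm s)^m$, and the boundary case $s=1$ with the convention $0^0=1$.
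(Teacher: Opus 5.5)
Your proposal is correct and follows essentially the same route as the paper. Both factor the likelihood ratio over coordinates, cancel $e^{-\lambda}\lambda^m/m!$ in the ratio of the two-point Poisson mixture to $\Pois(n/N)$ to obtain $g_{m,n/N}(\epsilon N^{1-1/p})$, and take logarithms to get $T(w^{\LR})$. Your remarks on positivity of $g_{m,\lambda}(s)$ for $s\le 1$ and on the boundary case $s=1$ are details the paper leaves implicit.
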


\begin{proof}
The log-likelihood ratio statistic is given by
\begin{align*}
\ell(\tilde{O}_1,\ldots,\tilde{O}_N;\pi^*) & := \sum_{i=1}^N \log(\tilde{L}_i(\tilde{O}_i;\pi_i^*)), 
\end{align*}
where 
\begin{align*}
\tilde{L}_i(x;\pi_i^*) & = \frac{\Prp{\tilde{O}_i=x \mid \tilde{H}(\pi^*)} }{\Prp{\tilde{O}_i = x \mid H_0} }.
\end{align*}
Note that,
\begin{align*}
\tilde{L}_i(x;\pi_i^*) & = \frac{1}{2}e^{-n \epsilon N^{-1/p}} \left( 1 + \epsilon N^{1-1/p} \right)^{x} + \frac{1}{2}e^{n \epsilon N^{-1/p}} \left( 1 - \epsilon N^{1-1/p} \right)^{x} \\
& = g_{x,n/N}(\epsilon N^{1-1/p}), 
\end{align*}
where the last equality uses $\lambda s = (n/N)\,\epsilon N^{1-1/p} = n \epsilon N^{-1/p}$ with $\lambda = n/N$ and $s = \epsilon N^{1-1/p}$. Set $w^{\LR}_m := \log g_{m,n/N}(\epsilon N^{1-1/p})$, $m=0,1,\ldots$. It follows that the likelihood ratio test rejects for large values of $\tilde{T}(w^{\LR})$. 
\end{proof}

Instead of working with the unconditioned likelihood ratio sequence $w^{\LR}$ of Lemma~\ref{lem:unconditioned_LR_test}, it is simpler to consider its scaled large $N$ approximation: $w^*$ of \eqref{eq:wm_star_equivalent}. The following lemma shows that this approximation does not change the asymptotic distribution of the standardized $\tilde{T}(w^{\LR})$.
\begin{lem} 
\label{lem:weight_approximation}
    Let $w_m^* = (m-(n/N))^2 - m$. Suppose that $n$ and $N$ tend to infinity with $N = o(n^2)$ and $u_n \to u^* \in (0,\infty)$. Then, 
    \begin{align} 
        \lim_{n \to \infty} \left[ \frac{\tilde{T}(w^{\LR})-\ex{\tilde{T}(w^{\LR}) \mid \tilde{H}(\pi^*)}}{\sqrt{\Var{\tilde{T}(w^{\LR}) \mid \tilde{H}(\pi^*)}}} - \frac{\tilde{T}(w^*)-\ex{\tilde{T}(w^*) \mid \tilde{H}(\pi^*)}}{\sqrt{\Var{\tilde{T}(w^*) \mid \tilde{H}(\pi^*)}}}  \right] = 0, 
    \end{align}
    in distribution. 
\end{lem}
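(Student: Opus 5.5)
The plan is to reduce both standardized statistics to sums of i.i.d.\ terms and show that their difference has vanishing second moment under $\tilde{H}(\pi^*)$. Write $s:=\epsilon N^{1-1/p}$ and $\lambda:=n/N$.

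The first step is a Taylor expansion of $\log g_{m,\lambda}(s)$ in $s$. Since $g_{m,\lambda}$ is even in $s$,
\[
g_{m,\lambda}(s)=\tfrac12\bigl(e^{m\log(1+s)-\lambda s}+e^{m\log(1-s)+\lambda s}\bigr)=\cosh\bigl((m-\lambda)s\bigr)e^{-ms^2/2}\bigl(1+O(ms^3)\bigr).
\]
Taking logarithms gives
\[
w^{\LR}_m=\tfrac{s^2}{2}\bigl((m-\lambda)^2-m\bigr)+r_m ,
\]
where $r_m=O\bigl(s^4(m-\lambda)^4+s^4m^2+s^3 m\bigr)$, with the odd-order terms cancelling by symmetry. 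Thus $w^{\LR}_m=\tfrac{s^2}{2}w^*_m+r_m$.

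The second step is to check that $s\to0$ in this regime. From $u_n\to u^*$ we get $s^2=\epsilon^2N^{2-2/p}\asymp N^{1/2}/n$, and $N=o(n^2)$ makes this $o(1)$. We also need $\lambda s^2\to0$, i.e.\ $N^{-1/2}\to 0$, which holds. Under $\tilde{H}(\pi^*)$ each $\tilde O_i\sim\Pois(\lambda(1\pm s))$, so the central moments of $\tilde O_i-\lambda$ are controlled by powers of $\lambda+1$.

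The third step compares the two statistics. Since affine rescaling does not change a standardized statistic, it suffices to show
\[
\frac{\Var{\sum_i r_{\tilde O_i}}}{\Var{\tfrac{s^2}{2}\sum_i w^*_{\tilde O_i}}}\to 0 .
\]
Given this, the correlation between $\tilde T(w^{\LR})$ and $\tfrac{s^2}{2}\tilde T(w^*)$ tends to $1$. The difference of the two standardized statistics then has second moment $2(1-\mathrm{corr})\to0$, which implies convergence to $0$ in probability and hence in distribution.

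For the denominator, a direct Poisson computation gives $\Var{w^*_{\tilde O_1}}\asymp \lambda^2+\lambda$ (it equals $2\lambda^2$ at $s=0$, plus lower-order corrections). This yields the order $N s^4(\lambda^2+\lambda)$.

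The numerator is the main obstacle. It is bounded by $N\,\ex{r_{\tilde O_1}^2}$, which is at most $N\bigl(s^8\,\ex{(\tilde O-\lambda)^8}+s^8\,\ex{\tilde O^4}+s^6\,\ex{\tilde O^2}\bigr)$. Using $\ex{(\tilde O-\lambda)^8}=O(\lambda^4+\lambda)$ and $\ex{\tilde O^k}=O(\lambda^k+\lambda)$, the ratio is
\[
O\bigl(s^4(\lambda^2+1)+s^2(\lambda+1)\bigr),
\]
which we need to vanish. Here $s^2\lambda\asymp N^{-1/2}\to0$ and $s^2\to0$, so the ratio is $o(1)$.

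The delicate part is making the Taylor remainder bound uniform in $m$. For large $m$ (of order $1/s$ or bigger) the expansion of $\log(1-s)$ and the $\cosh$ control break down. I would handle this by splitting on $\{\tilde O_i\le \lambda+ K\sqrt{\lambda+1}\log N\}$. On this event, $ms$ is small whenever $\lambda s\to 0$. When $\lambda s$ is not small, the $\cosh$ expansion of $\log\cosh(x)$ must be done exactly rather than via a power series, and this is the step I expect to need the most care. On the complementary event, I would use the crude bound $|w^{\LR}_m|\le C m(s+\log\tfrac{1}{1-s})$ together with Poisson tail bounds, so that this event contributes negligibly to the variances.
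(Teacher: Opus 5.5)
Your $L^2$ architecture is sound, and it differs from the paper's route, which only records $w^{\LR}_m=\tfrac{s^2}{2}\bigl(w^*_m+o(e^m+\lambda^2)\bigr)$ and appeals to convergence of moments. In your version, if $\Var{\sum_i r_{\tilde O_i}}=o\bigl(\Var{\tfrac{s^2}{2}\tilde T(w^*)}\bigr)$, then the correlation tends to $1$ and the standardized difference vanishes in $L^2$. The gap is in the quantitative estimate, which fails in the sparse regime $\lambda=n/N\to0$. That regime is allowed by the lemma and is central to the paper: Theorem~\ref{thm:main} assumes only $n=O(N)$ and $N=o(n^2)$, e.g.\ $N=n^{3/2}$.

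Two things go wrong there.
\begin{enumerate}
\item The denominator is misestimated. For $\tilde O\sim\Pois(\mu)$ one has $\Var{w^*_{\tilde O}}=2\mu^2+4\mu(\mu-\lambda)^2$. Moreover $\ex{w^*_{\tilde O_1}\mid Q_1}=\lambda^2s^2$ on both atoms of $\pi^*$. Hence $\Var{w^*_{\tilde O_1}}=2\lambda^2(1+s^2+2\lambda s^2)\asymp\lambda^2$, not $\lambda^2+\lambda$.
\item Your remainder bound does not vanish at $m=1$, a value taken with probability $\approx\lambda$. The term $s^3m$ is an artifact of your $(1+O(ms^3))$ factor, since $r_m$ is even in $s$; it puts $s^6\lambda$ into your bound on $\ex{r_{\tilde O_1}^2}$. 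Even the term $s^4m^2$ alone contributes $s^8\lambda$.
\end{enumerate}
With the correct denominator, your ratio bound is therefore of order $s^2/\lambda\asymp N^{3/2}/n^2$, or $s^4/\lambda\asymp N^2/n^3$ without the odd term. Neither tends to zero once $N\gtrsim n^{3/2}$, and the first already fails for $N\gtrsim n^{4/3}$. If $\lambda$ stays bounded away from zero, the same computation gives $O(s^2+s^4\lambda^2)$ with $s^4\lambda^2\asymp 1/N$, so the failure is specific to the sparse regime.

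The missing ingredient is that the true remainder vanishes to second order in $\lambda$ at $m=0,1$. This follows from the exact identity
\[
\log g_{m,\lambda}(s)=\tfrac m2\log(1-s^2)+\log\cosh\bigl(m\,\mathrm{artanh}(s)-\lambda s\bigr),
\]
together with $0\le x^2/2-\log\cosh x\le x^4/12$ for all real $x$. For $\lambda=O(1)$ and $s\le 1/2$ this gives $|r_m|\lesssim s^4(m+\lambda+1)^4$ uniformly in $m$, so your truncation on $\{\tilde O_i\le\lambda+K\sqrt{\lambda+1}\log N\}$ is unnecessary.

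At $m=0,1$ the identity reduces to $g_{0,\lambda}(s)=\cosh(\lambda s)$ and $g_{1,\lambda}(s)=\cosh(\lambda s)-s\sinh(\lambda s)$. The first gives $|r_0|\le(\lambda s)^4/12$. For the second, $r_1$ and $\partial_\lambda r_1$ vanish at $\lambda=0$, and $\partial_\lambda^2 r_1=s^2\bigl[(1-s^2)(\cosh(\lambda s)-s\sinh(\lambda s))^{-2}-1\bigr]=O(s^4)$; hence $r_1=O(\lambda^2 s^4)$.

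Since also $\ex{(1+\tilde O_1)^8\one_{\{\tilde O_1\ge2\}}}=O(\lambda^2)$, you get $\Var{r_{\tilde O_1}}=O(s^8\lambda^2)$, and the variance ratio is $O(s^4)\to0$. The paper's $o(s^2(e^m+\lambda^2))$ remainder is likewise too coarse at $m=1$ when $\lambda\to0$, so this cancellation has to be supplied explicitly whichever route you take.
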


\begin{proof}
The condition $N = o(n^2)$ and $u_n \to u^*$, implies $s := \epsilon N^{1-1/p} = o(1)$. By a Taylor expansion of $g_{m,\lambda}(s)$ (Lemma~\ref{lem:taylor_expansion} in the Appendix), we have
\begin{align*}
    \log g_{m,\lambda}(s) = \frac{s^2}{2} \left((m-\lambda)^2 - m \right) + o\!\left(s^2(e^m + \lambda^2)\right), 
\end{align*}
so, with $\lambda = n/N$ and $s = \epsilon N^{1-1/p} = o(1)$, we get
\begin{align}
\label{eq:gm_Taylor}
w^{\LR}_m = \frac{\epsilon^2 N^{2-2/p}}{2} \left(w_m^* + o(e^m + \lambda^2)\right).
\end{align}
Because \eqref{eq:gm_Taylor} shows that $w^{\LR}_m$ grows at most exponentially in $m$, and because $\pi^*$ has a finite support and all moments of the Poisson distribution exist, all moments of $\tilde{T}(w^{\LR})$ exist. The same is clearly true for $\tilde{T}(w^*)$. Thus, the difference between the moments of the two standardized statistics is $o(1)$, so the claim follows from standard moment convergence theorem (cf.\ \cite{anderson2010introduction}). 
\end{proof}

The following lemma applies Theorem~\ref{thm:CLT} to show that the Bayes risk of the likelihood ratio test in the conditioned Poisson setting is asymptotically identical to the Bayes risk of this test in the unconditioned setting. The proof is in Appendix~\ref{sec:proof_conditional_limit_theorem}.
\begin{lem}
    \label{lem:conditional_limit_theorem}
    Consider a test of the form \eqref{eq:linear_test} with
    $w_m^* = (m-(n/N))^2 - m$, 
    \[
    \tau^* = \frac{1}{2} n^2 \epsilon^2 N^{1-2/p},
    \]
    and $\pi^*$ of \eqref{eq:least_favorable_prior}. As $n,N \to \infty$ with $N = N_n = o(n^2)$ and $u_n \to u^*$, we have
    \[
    \tilde{\rho}(\pi^*; \psi_{w^*,\tau^*} \mid \Scal_n) + o(1) = \tilde{\rho}(\pi^*; \psi_{w^*,\tau^*}) = 2\Phi(-u^*/2).
    \]
\end{lem}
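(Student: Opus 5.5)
We will prove the two equalities of Lemma~\ref{lem:conditional_limit_theorem} in two parts: first compute the unconditioned Bayes risk $\tilde{\rho}(\pi^*;\psi_{w^*,\tau^*})$, then transfer it to the conditioned setting using Theorem~\ref{thm:CLT}, applied once under the null and once under the alternative.

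\emph{Step 1: moments under the two Poisson models.} Under $\tilde{H}_0$ each $\tilde O_i\sim\Pois(\lambda)$ with $\lambda=n/N$. The weight $w^*_m=(m-\lambda)^2-m$ is the second Poisson--Charlier polynomial, so $\ex{w^*_{\tilde O_i}}=0$ and $\Var{w^*_{\tilde O_i}}=2\lambda^2$. Hence $\ex{\tilde T(w^*)}=0$ and $\Var{\tilde T(w^*)}=2N\lambda^2=2n^2/N$.

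Under $\tilde H(\pi^*)$, condition on $Q_i$ with $\mu_i=nQ_i=\lambda\pm\delta$, where $\delta:=n\epsilon N^{-1/p}$. Then $\ex{(\tilde O-\lambda)^2-\tilde O\mid Q_i}=(\mu_i-\lambda)^2=\delta^2$, so
\[
\ex{\tilde T(w^*)}=N\delta^2=n^2\epsilon^2N^{1-2/p}=2\tau^*.
\]
The conditional variance is $2\mu_i^2+4\mu_i\delta^2$. The hypotheses $u_n\to u^*$ and $N=o(n^2)$ give $\delta/\lambda=\epsilon N^{1-1/p}=o(1)$, and $\delta^2/\lambda=o(\lambda)$ when $n=O(N)$ (one checks $\delta^2=O(\lambda\sqrt N/n)$). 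Therefore the alternative variance is $2n^2/N\,(1+o(1))$, and the standardized separation is
\[
\frac{2\tau^*}{\sqrt{2n^2/N}}=\frac{n\epsilon^2N^{3/2-2/p}}{\sqrt2}=u_n\to u^*.
\]
The threshold $\tau^*$ is the midpoint of the two means. So, once normality holds, type I error is $\Phi(-u^*/2)+o(1)$ and type II error is $\Phi(-u^*/2)+o(1)$, giving the second equality. Normality in the unconditioned model follows from the Lyapunov CLT for i.i.d.\ sums, using the third-moment bound verified in Step 2.

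\emph{Step 2: verify the hypotheses of Theorem~\ref{thm:CLT}.} We do this for both $\pi_1=\delta_{1/N}$ (null) and $\pi_1=\pi_1^*$ (alternative).

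Condition (ii): $\ex{Q_1}=1/N$ is immediate. The support lies in $[0,2/N]$, which is contained in $[0,C/n]$ because $n=O(N)$.

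Condition (i): under the null, $\Cov{\tilde O,w^*_{\tilde O}}$ is the third central moment minus the variance, namely $\lambda-\lambda=0$. Under the alternative it is an $O(\delta^2)$-type term, which is small relative to $\sqrt{\lambda}\cdot\sqrt2\lambda$.

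Condition (iii): when $\lambda$ is large, $(\tilde O-\lambda)^2-\tilde O$ has third absolute central moment $O(\lambda^3)$. When $\lambda$ is bounded it is $O(\lambda^{3/2})$ or better; in every case it is bounded by a constant multiple of $\Var{w^*_{\tilde O}}^{3/2}$ when $\lambda\gtrsim$ const. The Lyapunov ratio is therefore $O(1/\sqrt N)\to0$. The regime $\lambda\to0$ is excluded by $n=O(N)$ only in one direction. For small $\lambda$, I would check directly that the moments of $w^*$ scale like $\lambda$ and $\lambda^{3/2}$; this gives a ratio of $O(1/\sqrt{N\lambda})$, and $N\lambda=n\to\infty$.

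\emph{Step 3: transfer.} Theorem~\ref{thm:CLT} states that conditioning on $\Scal_n$ leaves the standardized law asymptotically $\Phi$, with the \emph{unconditioned} centering and scaling. The conditioned errors
\[
\Prp{\tilde T(w^*)\ge\tau^*\mid \tilde H(\mathbf q_{\unif}\mid\Scal_n)}\quad\text{and}\quad\Prp{\tilde T(w^*)<\tau^*\mid\tilde H(\pi^*\mid\Scal_n)}
\]
then match their unconditioned counterparts up to $o(1)$, uniformly in the threshold since the convergence is in sup-norm. This gives the first equality.

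The main obstacle is condition (i) together with the small-$\lambda$ Lyapunov bound under the alternative. It requires careful bookkeeping of the exact Poisson moments of a quadratic polynomial, with $\delta$ both comparable to $\lambda$ and small relative to it. I would handle it by writing everything through the exact factorial moments $\ex{\tilde O^{\underline k}\mid Q}=\mu^k$.
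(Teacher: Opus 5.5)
Your overall route is the paper's. You compute the null and alternative moments of $\tilde T(w^*)$, verify (i)--(iii) of Theorem~\ref{thm:CLT} for both $\pi_1=\delta_{1/N}$ and $\pi_1=\pi_1^*$, and read off a Gaussian shift with the midpoint threshold $\tau^*=\mu_1/2$. Your moment computations, your checks of (i) and (ii), and the transfer in Step 3 are fine. Your argument for (ii) (support in $[0,2/N]\subset[0,C/n]$ once $\delta/\lambda=\epsilon N^{1-1/p}<1$) is cleaner than the paper's. The gap is in the Lyapunov condition (iii) when $\lambda=n/N\to0$. That regime is allowed by the hypotheses (e.g.\ $N=n^{3/2}$), and it is exactly the step you flagged as the main obstacle.

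\textbf{The small-$\lambda$ scaling is wrong.} The variance of $w^*_{\tilde O_1}$ is $2\lambda^2$, as you computed yourself in Step 1, not of order $\lambda$. The third absolute central moment is $4\lambda^2(1+O(\lambda))$. It is dominated by the event $\{\tilde O_1=2\}$, which has probability $\approx\lambda^2/2$ and on which $w^*\approx 2$; the event $\{\tilde O_1=1\}$ contributes only $O(\lambda^4)$.

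\textbf{Consequence for the Lyapunov ratio.} The ratio is of order $\lambda^2/(\sqrt N\lambda^3)=1/(\sqrt N\lambda)=\sqrt N/n$, not $1/\sqrt{N\lambda}=1/\sqrt n$. For the same reason, your bound ``$O(\lambda^{3/2})$ or better'' on the third moment is too weak against $\Var{w^*_{\tilde O_1}}^{3/2}\asymp\lambda^3$.

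The condition does hold, but only because $N=o(n^2)$; this is precisely where the paper invokes that hypothesis. The same order persists under $\pi^*$ because $\delta=o(\lambda)$. So this fix also repairs the unconditioned Lyapunov CLT you invoke in Step 1.

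Your version would yield asymptotic normality for every $N$ as soon as $n\to\infty$, and that is false for $N\gg n^2$. In that regime the fluctuations of $\tilde T(w^*)$ come from the number of cells with $\tilde O_i\ge 2$, a count with mean $\approx n^2/(2N)\to0$, which is not Gaussian.

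A minor point: the relation you need in Step 1 is $\delta^2/\lambda^2=\epsilon^2N^{2-2/p}=\sqrt2\,u_n\sqrt N/n$, equivalently $\delta^2=\sqrt2\,u_n\lambda/\sqrt N$, rather than $\delta^2=O(\lambda\sqrt N/n)$.
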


\section{Proofs of main results}
\label{sec:proofs}

\subsection{Proof of Theorem~\ref{thm:main}}

\begin{proof}
By Lemma~\ref{lem:risk_equivalence}, 
\[
R^* = \tilde{R}^* \mid \Scal_n.
\]
Fix the prior $\pi^*$ defined in \eqref{eq:least_favorable_prior}. This prior satisfies $\pi^* \in \Pi_N$, hence Lemma~\ref{lem:Bayes_equivalence} implies that
\[
\tilde{R}^*\mid \Scal_n \geq \tilde{\rho}^*(\pi^* \mid \Scal_n) + o(1).
\]

Let $\psi^{\LR}$ denote the likelihood ratio test of Lemma~\ref{lem:unconditioned_LR_test}. By Neyman-Pearson theory, 
\[
\tilde{\rho}^*( \pi^* \mid \Scal_n) = \tilde{\rho}^* ( \pi^* ; \psi^{\LR} \mid \Scal_n).
\]
Let $\psi^* := \psi_{w^*,\tau^*}$ be the test with weights $w_m^* = (m-n/N)^2 - m$, $m=0,1,\ldots$, and threshold $\tau^* := \frac{1}{2} n^2 \epsilon^2 N^{1-2/p}$. By Lemma~\ref{lem:weight_approximation}, $\psi^*$ approximates the likelihood ratio test in the sense that 
\[
\tilde{\rho}^* ( \pi^* ; \psi^{\LR} \mid \Scal_n) = \tilde{\rho}(\pi^*;\psi^* \mid \Scal_n) + o(1).
\]
By Lemma~\ref{lem:conditional_limit_theorem}, as $N\to\infty$ with $u_n \to u^*$,
\[
\tilde{\rho}(\pi^*; \psi^* \mid \Scal_n) + o(1) = 2\Phi(-u^*/2).
\]
Combining the displays and taking $\liminf$ yields
\[
\liminf_{N\to\infty} R^* \geq 2\Phi(-u^*/2),
\]
as claimed.
\end{proof}

\subsection{Proof of Theorem~\ref{thm:CLT}}
\label{sec:proof_CLT}
\begin{proof}
The proof uses the characteristic function of the joint distribution of $(\tilde{T}_N(w), S_N)$. The main step is an inversion formula for the discrete random variable $S_N$, evaluated at the event $S_N=n$; related uses of this idea appear in \cite{holst1981conditional,janson2001moment,klein2019conditional} and the references therein. 

Define random variables
    \begin{align}
        \label{eq:XY_definitions}
    X := \tilde{O}_1,\qquad \text{and} \quad Y := w_{\tilde{O}_1}, 
\end{align}
and their means and variances $\mu_X := \ex{X}$, $\sigma_X^2=\Var{X}$, $\mu_Y := \ex{Y}$, and $\sigma_Y^2:=\Var{Y}$. Define sequences
\begin{align}
    a_n^2 := \Var{S_N} = N\sigma_X^2,\quad \text{ and } \quad b_n^2 := \Var{\tilde T_N(w)} = N\sigma_Y^2.
    \label{eq:sum_variance_sequences}
\end{align}

Because $S_N$ takes values on the lattice $\mathbb{Z}$, we have the pointwise identity
\begin{align}
    \label{eq:inversion-identity}
    \one_{\{S_N=n\}} = \frac{1}{2\pi}\int_{-\pi}^{\pi} e^{i u (S_N - n)}\, \dd u.
\end{align}
Taking expectations in \eqref{eq:inversion-identity}, interchanging expectation and integral (justified by Fubini's theorem, as the integrand is bounded by $1$ on a finite interval), and using $\mu_X=n/N$ together with the independence of $\tilde{O}_i$ over $i=1,\ldots,N$, we obtain
\[
\Prp{S_N = n} = \frac{1}{2\pi}\int_{-\pi}^{\pi} \ex{e^{i u (S_N - n)}} \dd u = \frac{1}{2\pi}\int_{-\pi}^{\pi} \left(\ex{e^{i u (X - \mu_X)}} \right)^N \dd u.
\]
Likewise, multiplying \eqref{eq:inversion-identity} by
$\exp\{it(\tilde T_N(w)-\ex{\tilde T_N(w)})/b_n\}$ before taking expectations, we get
\begin{align}
\ex{
\exp\left\{it\frac{\tilde T_N(w)-\ex{\tilde T_N(w)}}{b_n}\right\}
\one_{\{S_N=n\}}}
 & = \frac{1}{2\pi}\int_{-\pi}^{\pi}
\left( \ex{ e^{it \frac{Y-\mu_Y}{b_n}+iu(X-\mu_X)}} \right)^N
\dd u \nonumber \\
& = \frac{1}{2\pi}\int_{-\pi}^{\pi}  \left( \Psi_n(t,a_n u) \right)^N
\dd u,
\label{eq:tilted-llt-integral}
\end{align}
where we defined the characteristic function 
\[
\Psi_n(t,v):= \ex{ e^{\frac{i}{\sqrt{N}} \left(t \frac{Y-\mu_Y}{\sigma_Y}+v \frac{X-\mu_X}{\sigma_X}\right)}}.
\]
Notice that $\left(\Psi_n(t,v)\right)^N$ is the characteristic function of the standardized version of $(\tilde{T}_N(w),S_N)$. The following lemma provides an estimate of the inverse Fourier integral for this characteristic function at the local point $S_N=n$. This lemma is a conditional version of a lattice local central limit theorem for triangular arrays; see, for
example, \cite[Ch.~VII]{petrov1975sums} or
\cite[Ch.~19]{bhattacharya1976normal} and
\cite{holst1981conditional,janson2001moment,klein2019conditional} for related conditional limit theorems. The proof is provided in Appendix~\ref{sec:proof-of-lem-tilted-llt}.
\begin{lem}
    \label{lem:tilted-llt}
    For a fixed $t \in \reals$, as $n \to \infty$,
\begin{align}
\label{eq:tilted-llt}
\frac{1}{2\pi}\int_{-\pi a_n}^{\pi a_n}\Psi_n(t,v)^N\,\dd v
\longrightarrow
\frac{1}{\sqrt{2\pi}}e^{-t^2/2}.
\end{align}
\end{lem}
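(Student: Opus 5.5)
We split the integral $\frac{1}{2\pi}\int_{-\pi a_n}^{\pi a_n}\Psi_n(t,v)^N\dd v$ into a central region $|v|\le \delta\sqrt N$ (small $\delta>0$) and a tail region $\delta\sqrt N<|v|\le \pi a_n$. We then show that the central part converges to $\frac{1}{2\pi}\int_{\reals} e^{-(t^2+v^2)/2}\dd v=\frac{1}{\sqrt{2\pi}}e^{-t^2/2}$ and that the tail part is $o(1)$. The limit uses that $(\Psi_n)^N$ is the characteristic function of the standardized pair $(\tilde T_N(w),S_N)$, whose limiting covariance is the identity by assumption (i), $\Corr{X,Y}=o(1)$.

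\textbf{Central region.} We expand $\log\Psi_n(t,v)$ to second order with a third-moment remainder. Writing $Z:=t\frac{Y-\mu_Y}{\sigma_Y}+v\frac{X-\mu_X}{\sigma_X}$, we have $\ex{Z}=0$ and $\ex{Z^2}=t^2+v^2+2tv\,\Corr{X,Y}$. Hence
\[
\Psi_n(t,v)=1-\frac{\ex{Z^2}}{2N}+\theta\frac{\ex{|Z|^3}}{6N^{3/2}},\qquad |\theta|\le 1 .
\]
We bound $\ex{|Z|^3}\lesssim |t|^3\ex{|Y-\mu_Y|^3}/\sigma_Y^3+|v|^3\ex{|X-\mu_X|^3}/\sigma_X^3$. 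The $Y$ term is $o(\sqrt N)$ by the Lyapunov condition (iii). For the $X$ term, assumption (ii) gives $\lambda:=n/N$, $\sigma_X^2\ge \lambda$ (mixed Poisson variance $\ge$ mean), and the Poisson-mixture third absolute moment is $O(\lambda+\lambda^{3/2})$ since $nQ\le C$. This makes $\ex{|X-\mu_X|^3}/(\sqrt N\sigma_X^3)=O(1/\sqrt{N\lambda})=O(1/\sqrt n)\to0$. Consequently $N\log\Psi_n(t,v)\to-(t^2+v^2)/2$ pointwise in $v$. For domination on $|v|\le\delta\sqrt N$, we use $|\Psi_n|\le 1-\ex{Z^2}/(2N)+|v|^3\delta'/N^{3/2}\cdot O(1)$, which for $\delta$ small gives $|\Psi_n(t,v)|^N\le e^{-v^2/4+O(t^2)}$. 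Dominated convergence then yields the Gaussian limit.

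\textbf{Tail region.} This is the main obstacle. Here $u=v/a_n$ satisfies $\delta\sqrt N/a_n<|u|\le\pi$, i.e.\ $|u|\gtrsim \delta/\sigma_X$. We bound $|\Psi_n(t,v)|\le \big|\ex{e^{iu(X-\mu_X)}\mid\text{}}\big|$ after conditioning on $Y$. Since $Y$ is a function of $X$, we cannot factor, so instead we use $|\Psi_n|\le |\ex{e^{iuX}}|+\frac{|t|}{\sqrt N}\ex{|Y-\mu_Y|}/\sigma_Y\le|\ex{e^{iuX}}|+|t|/\sqrt N$. For the mixed Poisson law, $|\ex{e^{iuX}}|\le\ex{e^{-nQ(1-\cos u)}}$. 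We must show this is at most $1-c\min(1,\lambda u^2)$ uniformly. Using $\ex{nQ}=\lambda$ and $nQ\le C$ (so $nQ$ has mass comparable to its mean, i.e.\ $\Prp{nQ\ge\lambda/2}\ge\lambda/(2C)$), we get $\ex{e^{-nQ(1-\cos u)}}\le 1-c'\lambda\min(1,u^2)$. Since $n=O(N)$ gives $\lambda=O(1)$, and $\sigma_X^2\asymp\lambda$ (again by $nQ\le C$), on the tail region $\lambda u^2\gtrsim\delta^2\lambda/\sigma_X^2\gtrsim\delta^2$. Thus $|\Psi_n|\le 1-c\,\delta^2\lambda\min(1,\cdot)/\,$const$\,+|t|/\sqrt N$. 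Raising to the $N$th power and integrating over a $v$-range of length $2\pi a_n=O(\sqrt{N\lambda})$, we obtain a bound of order $\sqrt{n}\,e^{-c\delta^2 n}+o(1)$. The $|t|/\sqrt N$ perturbation contributes only an $e^{O(|t|\sqrt N)}$ factor, and this is where care is needed: we refine the bound by expanding in $t$ to second order, so that the perturbation is $O(t^2/N)$ rather than $|t|/\sqrt N$, making it harmless against $e^{-c\delta^2 n}$ since $n\to\infty$ faster than $\sqrt N$ when $N=o(n^2)$. Combining both regions gives \eqref{eq:tilted-llt}.
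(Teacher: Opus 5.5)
Your decomposition is cut at the wrong scale. The range is $|v|\le\pi a_n$ with $a_n^2=N\sigma_X^2\asymp n$, and the hypotheses only give $\lambda=n/N\le C$. The regime $\lambda\to0$ is allowed, e.g.\ $N=n^{3/2}$, which satisfies $N=o(n^2)$ and $n=O(N)$. There $\delta\sqrt N\gg\pi a_n$, so your tail region is empty and your central region is the whole range $|u|=|v|/a_n\le\pi$.

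Your domination step treats $\ex{|X-\mu_X|^3}/\sigma_X^3$ as $O(1)$. By your own computation it is only $O(\lambda^{-1/2})$, and this is sharp under the null. So the cubic remainder is of order $|v|^3/(N\sqrt n)$, which is not small relative to $v^2/N$ once $|v|\gtrsim\sqrt n$. The bound $|\Psi_n(t,v)|^N\le e^{-v^2/4+O(t^2)}$ is in fact false there. Under the null at $u=\pi$ one has $|\ex{e^{i\pi X}}|=e^{-2\lambda}$, hence $|\Psi_n(t,\pi a_n)|^N\ge(e^{-2\lambda}-|t|/\sqrt N)^N=e^{-2n-o(n)}$, whereas $e^{-v^2/4}=e^{-\pi^2 n/4}$. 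So dominated convergence is not justified.

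The repair is to cut at $\varepsilon a_n$ instead of $\delta\sqrt N$. On $|v|\le\varepsilon a_n$ your Taylor argument works uniformly in $\lambda$, because $|v|\,\ex{|X-\mu_X|^3}/(\sqrt N\sigma_X^3)\le\varepsilon a_n\,O(n^{-1/2})=O(\varepsilon)$; this is the paper's regions (I)--(II). The range $\varepsilon\le|u|\le\pi$ needs a separate aperiodicity-type bound, which is the paper's region (III).

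On that outer region you need a bound linear in $\lambda$: $|\ex{e^{iuX}}|\le 1-\kappa(\varepsilon)\lambda$. Your Markov-type justification ($\Prp{nQ\ge\lambda/2}\ge\lambda/(2C)$) only gives $1-c\lambda^2(1-\cos u)$ when $\lambda\to0$. Raised to the $N$th power this is $e^{-cn^2/N}$, which does not absorb the factor $e^{|t|\sqrt N}$ from the $Y$-perturbation unless $N=o(n^{4/3})$.

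The paper gets the linear bound from concavity of $x\mapsto 1-e^{-x(1-\cos u)}$ on $[0,C]$, i.e.\ $1-e^{-\Lambda(1-\cos u)}\ge(\Lambda/C)(1-e^{-C(1-\cos u)})$. With it, $|\Psi_n|\le 1-\kappa\lambda+|t|/\sqrt N\le 1-\kappa\lambda/2$ for large $n$, since $\lambda\sqrt N=n/\sqrt N\to\infty$; this is where $N=o(n^2)$ enters. The tail integral is then $O(\sqrt n\,e^{-\kappa n/2})$.

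No second-order refinement in $t$ is needed, and the one you propose does not hold. The first-order term $\frac{it}{\sqrt N}\ex{e^{iu(X-\mu_X)}(Y-\mu_Y)/\sigma_Y}$ does not vanish for $u\neq0$, because $Y$ is a function of $X$. So the perturbation is not $O(t^2/N)$. When $n\asymp N$ your argument does reduce to the paper's, with regions (I) and (II) merged via dominated convergence.
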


Changing variables in \eqref{eq:tilted-llt-integral} to $v=a_n\ u$ and applying Lemma~\ref{lem:tilted-llt}, yields
\begin{align}
& a_n\,
\ex{
\exp\left\{it\frac{\tilde T_N(w)-\ex{\tilde T_N(w)}}{\sqrt{\Var{\tilde T_N(w)}}}\right\}
\one_{\{S_N=n\}}} \nonumber \\
& \qquad \qquad =
\frac1{2\pi}\int_{-\pi a_n}^{\pi a_n}\left(\Psi_n(t,v) \right)^N\,\dd v \to \frac1{\sqrt{2\pi}}e^{-t^2/2}.
\label{eq:tilted-llt-integral-limit}
\end{align}
In particular, with $t=0$ we get
\begin{align}
a_n\,\Prp{S_N=n}
\longrightarrow
\frac1{\sqrt{2\pi}}.
\label{eq:tilted-llt-integral-zero}
\end{align}
The definition of the conditional expectation for the indicator random variables, and applying \eqref{eq:tilted-llt-integral-limit} and \eqref{eq:tilted-llt-integral-zero}, give
\begin{align*}
    \ex{
\exp\left\{it\frac{\tilde{T}_N(w)-\ex{\tilde{T}_N(w)}}
{\sqrt{\Var{\tilde T_N(w)}}}\right\} \mid S_N=n} & = \frac{\ex{
    \exp\left\{it\frac{\tilde T_N(w)-\ex{\tilde T_N(w)}}{b_n}\right\}
    \one_{\{S_N=n\}}}}{\Prp{S_N=n}} \\
    & = \frac{a_n \ex{
        \exp\left\{it\frac{\tilde T_N(w)-\ex{\tilde T_N(w)}}{b_n}\right\}
        \one_{\{S_N=n\}}}}{a_n \Prp{S_N=n}} \\
        & \longrightarrow e^{-t^2/2}.
\end{align*}
Since we obtained the standard normal characteristic function, Levy's continuity theorem (cf.\ \cite[Thm.~2.13]{van2000asymptotic}) implies
\[
\frac{U_N(w)-\ex{\tilde T_N(w)}}
{\sqrt{\Var{\tilde T_N(w)}}}
\overset{d}{\longrightarrow} \Ncal(0,1).
\]
Finally, convergence in distribution to a continuous distribution function is uniform by P\'olya's theorem (cf.\ \cite[Lem.~2.11]{van2000asymptotic}), thus
\[
\sup_{x\in\reals}
\left| \Prp{\left\{ \frac{U_N(w)-\ex{\tilde T_N(w)}}
{\sqrt{\Var{\tilde T_N(w)}}} \le x \right\}} -\Phi(x) \right|
\longrightarrow 0.
\]
\end{proof}

\section{Proof of technical lemmas}
\label{sec:proofs-app}

\subsection{Proof of Lemma~\ref{lem:tilted-llt}}
\label{sec:proof-of-lem-tilted-llt}
\begin{proof}
    We fix $t \in \reals$ and a constant $W \in \reals$. We divide the integration range $|v|\leq \pi a_n$ into three areas:
\begin{itemize}
    \item (I) $|v| < W$.
    \item (II) $W \leq |v| < \varepsilon_t a_n$ for some $\varepsilon_t > 0$ that will be determined later. 
    \item (III) $\varepsilon_t a_n \leq |v| \le \pi a_n$.
\end{itemize}
The proof will show that region (I) provides the Gaussian contribution, whereas regions (II) and (III) have negligible contributions as $W \to \infty$. This decomposition to regions is common in lattice local central limit theorems, and can be found, for example, in \cite{petrov1975sums,holst1979unified,janson2001moment,klein2019conditional}.

We first establish the asymptotic scale of the sequence $a_n$. Let $\Lambda := nQ_1$. We have $\ex{\Lambda}= \mu_X = \frac{n}{N}$, and by (ii), $0 \le \Lambda \le C$. In particular, it follows that 
\[
    \Var{\Lambda} \leq \ex{\Lambda^2} \leq C \ex{\Lambda} = C\,n/N.
\]
By the law of total variance,
\[
\sigma_X^2=\ex{\Lambda}+\Var{\Lambda}, 
\]
which leads to
\[
\frac{n}{N} = \ex{\Lambda} \le \sigma_X^2 = \ex{\Lambda}+\Var{\Lambda} \le (1+C)\frac {n}{N}.
\]
Consequently 
\begin{align}
    \label{eq:a_n-bound}
n \le a_n^2 \le (1+C)n,
\end{align}
so $a_n\asymp \sqrt{n}$. 

We also need the following lemma, which provides a third moment bound to $X$, analogously to the bound for $Y$ provided by assumption (iii) of Theorem~\ref{thm:CLT}. 
\begin{lem}
    \label{lem:X-bound}
Under the setup of \eqref{eq:XY_definitions} and assumption (ii) of Theorem~\ref{thm:CLT},
\begin{align}
        \label{eq:X-bound}
    \frac{\ex{|X-\mu_X|^3}}{\sqrt{N} \sigma_X^3} = O(n^{-1/2}) \longrightarrow 0.
    \end{align}
\end{lem}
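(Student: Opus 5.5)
The plan is to bound the numerator $\ex{|X-\mu_X|^3}$ from above by a constant multiple of $n/N$, and then use the lower bound $\sigma_X^2 \ge n/N$ established just above, i.e.\ the left inequality in \eqref{eq:a_n-bound}. Granting the numerator bound, the ratio becomes
\[
\frac{\ex{|X-\mu_X|^3}}{\sqrt{N}\sigma_X^3} \le \frac{K\, n/N}{\sqrt{N}\,(n/N)^{3/2}} = \frac{K}{\sqrt{n}},
\]
which is the claimed $O(n^{-1/2})$.

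To bound the numerator, we condition on $\Lambda = nQ_1 \in [0,C]$ and split $X-\mu_X = (X-\Lambda) + (\Lambda-\mu_X)$. By the elementary inequality $|a+b|^3 \le 4(|a|^3+|b|^3)$, it suffices to bound the two terms separately.

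\textbf{Poisson fluctuation term.} Given $\Lambda$, $X-\Lambda$ is a centered $\Pois(\Lambda)$ variable. For $\lambda \in [0,C]$, use $\ex{|Z-\lambda|^3} \le \sqrt{\ex{(Z-\lambda)^2}\,\ex{(Z-\lambda)^4}}$ together with the Poisson identities $\ex{(Z-\lambda)^2}=\lambda$ and $\ex{(Z-\lambda)^4}=\lambda+3\lambda^2$. This gives
\[
\ex{|Z-\lambda|^3} \le \lambda\sqrt{1+3\lambda} \le \lambda\sqrt{1+3C}.
\]
Taking expectations over $\Lambda$ yields $\ex{|X-\Lambda|^3} \le \sqrt{1+3C}\,\ex{\Lambda} = \sqrt{1+3C}\, n/N$.

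\textbf{Mixing term.} Since $\Lambda$ and $\mu_X$ both lie in $[0,C]$, we have $|\Lambda-\mu_X|\le C$. Hence
\[
\ex{|\Lambda-\mu_X|^3} \le C^2\,\ex{|\Lambda-\mu_X|} \le 2C^2\mu_X = 2C^2 n/N.
\]
Combining the two terms gives $\ex{|X-\mu_X|^3} \le K n/N$ with $K = 4(\sqrt{1+3C}+2C^2)$, which completes the argument.

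The only delicate point is that the mean $n/N$ may be small, since only $n=O(N)$ is assumed. A crude bound on the third moment by a constant would therefore not suffice. What makes the argument work is that every bound above is linear in $\ex{\Lambda}$, so it matches the $n/N$ scale of $\sigma_X^2$. No further obstacle is expected.
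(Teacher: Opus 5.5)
Your proof is correct and follows the same route as the paper: bound $\ex{|X-\mu_X|^3}$ by a constant times $\mu_X=n/N$, then use $\sigma_X^2\ge n/N$ to get $O(n^{-1/2})$. The only difference is in the numerator bound, and it is minor. The paper skips the conditional centering and Cauchy--Schwarz step, using instead the raw moment $\ex{X^3}=\ex{\Lambda^3+3\Lambda^2+\Lambda}\le (C^2+3C+1)\mu_X$ together with $|X-\mu_X|^3\le 4(X^3+\mu_X^3)$ and $\mu_X^3\le C^2\mu_X$.
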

The proof of Lemma~\ref{lem:X-bound} is in Appendix~\ref{sec:proof-of-lem-X-bound}.
    
For fixed $(t,v)\in\reals^2$, define 
\[
    Z_{n}(t,v) := t\frac{Y-\mu_Y}{\sigma_Y}+v\frac{X-\mu_X}{\sigma_X}.
\]
We have, 
\[
\Psi_n(t,v)=\ex{e^{i Z_n(t,v)/\sqrt N}},
\quad \ex{Z_n(t,v)}=0, \text{~~and~~}
\Var{Z_n(t,v)} =: t^2+v^2+2\rho_n tv,
\]
where $\rho_n := \Corr{X,Y} = o(1)$ by assumption (i).

\subsubsection*{Region (I)}
A Taylor expansion of the complex exponent gives
\begin{align}
    \label{eq:taylor-expansion}
\left|e^{ix}-1-ix+\frac{x^2}{2}\right| \le A|x|^3,
\end{align}
for some constant $A<\infty$. Thus, uniformly for $|v|\le W$,
\[
\Psi_n(t,v)
=1-\frac{t^2+v^2+2\rho_n tv}{2N}
+O\left(\frac{\ex{|Z_n(t,v)|^3}}{N^{3/2}}\right).
\]
Likewise, for $|v|\le W$, there is a constant $B_{t,W}<\infty$ such that 
\[
\ex{|Z_n(t,v)|^3}
\le
B_{t,W}\left[
\ex{\left|\frac{Y-\mu_Y}{\sigma_Y}\right|^3}
+\ex{\left|\frac{X-\mu_X}{\sigma_X}\right|^3}
\right].
\]
By (iii) and Lemma~\ref{lem:X-bound}, we get
\[
    \sup_{|v|\le W} \frac{1}{\sqrt{N}}\ex{|Z_n(t,v)|^3} \longrightarrow 0,
\]
hence
\[
\sup_{|v|\le W} \left| N(\Psi_n(t,v)-1) + \frac{1}{2} \left(t^2+v^2+2\rho_n tv\right) \right|
\longrightarrow 0.
\]
Equivalently, uniformly for $|v|\le W$,
\begin{align}
    \label{eq:tilted-llt-tail-estimate-uniform}
N(\Psi_n(t,v)-1) = -\frac{1}{2}\left(t^2+v^2+2\rho_n tv\right)+o(1).
\end{align}
Since $t$ and $W$ are fixed and $\rho_n=o(1)$ by (i), the right-hand side of 
\eqref{eq:tilted-llt-tail-estimate-uniform} is uniformly bounded. As \eqref{eq:tilted-llt-tail-estimate-uniform} implies $\sup_{|v|\le W}|\Psi_n(t,v)-1|=O(1/N)$, a Taylor expansion yields
\begin{align}
    \label{eq:tilted-llt-tail-log}
N\log\Psi_n(t,v)
=N\{\Psi_n(t,v)-1\}+o(1)
\end{align}
uniformly for $|v|\le W$. By \eqref{eq:tilted-llt-tail-estimate-uniform} and \eqref{eq:tilted-llt-tail-log}, we conclude that
\begin{align}
    \label{eq:tilted-llt-tail-estimate-uniform-I}
\lim_{n \to \infty} \Psi_n(t,v)^N
= 
\exp\left\{ \lim_{n \to \infty} N \left( \Psi_n(t,v)-1\right)\right\}
= \exp\left\{-\frac{1}{2}(t^2+v^2)\right\}
\end{align}
uniformly for $|v|\le W$. Therefore, for every fixed $W<\infty$,
\begin{align}
    \label{eq:tilted-llt-compact-limit}
\int_{|v|\le W} \Psi_n(t,v)^N\,\dd v
\longrightarrow
\int_{|v|\le W} \exp\left\{-\frac{1}{2}(t^2+v^2)\right\}\,\dd v.
\end{align}

\subsubsection*{Region (II)}
On the region $W \le |v| \le \varepsilon_t a_n$, set
\[
V_{n,v}:= \Var{Z_n(t,v)} = t^2+v^2+2\rho_n tv.
\]
Since $\rho_n=o(1)$, for all large $n$ we have $V_{n,v} \ge \frac{1}{2}(t^2+v^2)$. We also have, 
$v^2\le \varepsilon_t^2a_n^2$, $a_n^2/N=\sigma_X^2 \le (1+C)n/N\le C(1+C)$, and $t^2/N\to0$. Thus, by decreasing $\varepsilon_t$ if necessary, we may also assume that $V_{n,v}/(2N)\le 1$ throughout this region for all large $n$. 

Let
\[
L_{Y,n}:=\frac{\ex{ |(Y-\mu_Y)/\sigma_Y|^3}}{\sqrt{N}},
\qquad
L_{X,n}:=\frac{\ex{ |(X-\mu_X)/\sigma_X|^3}}{\sqrt{N}}.
\]
By (iii) $L_{Y,n} \to 0$ and by \eqref{eq:X-bound} $L_{X,n}=O(n^{-1/2})$. From these and the lower bound on $V_{n,v}$ above, there is a constant  $B<\infty$ such that
\begin{equation}
\label{eq:third-moment-ratio-II}
\frac{\ex{ |Z_n(t,v)|^3}}{\sqrt N\,V_{n,v}}
\le B\left[ \frac{|t|^3L_{Y,n}}{V_{n,v}} +\frac{|v|^3L_{X,n}}{V_{n,v}} \right].
\end{equation}
For $W\ge 1$, uniformly over $W \le |v|\le \varepsilon_t a_n$, the first term in \eqref{eq:third-moment-ratio-II} is
$o(1)$ as $n\to\infty$, while the second term is at most
\[
B L_{X,n}|v| \le B n^{-1/2}\varepsilon_t a_n \le B'\varepsilon_t.
\]
Choose $\varepsilon_t$ small enough that the last bound is less than
$1/(8A)$, and then take $n$ large enough that the first term is also less
than $1/(8A)$. We now apply \eqref{eq:taylor-expansion} with
$x=Z_n(t,v)/\sqrt{N}$. Since $\ex{Z_n(t,v)}=0$ and
$\ex{Z_n(t,v)^2}=V_{n,v}$, taking expectations gives
\begin{align}
    \label{eq:psi-taylor-II}
\Psi_n(t,v)
=
1-\frac{V_{n,v}}{2N}+r_{n,v},
\end{align}
where
\begin{align}
    \label{eq:psi-taylor-II-remainder}
|r_{n,v}|
= \left|\ex{e^{iZ_n(t,v)/\sqrt{N}}-1-\frac{iZ_n(t,v)}{\sqrt{N}}+\frac{Z_n(t,v)^2}{2N}}\right|
\le \frac{A\,\ex{|Z_n(t,v)|^3}}{N^{3/2}}.
\end{align}
Combining \eqref{eq:psi-taylor-II-remainder} with \eqref{eq:third-moment-ratio-II}, we obtain, uniformly on $W \le |v|\le \varepsilon_t a_n$,
\[
|r_{n,v}|
= \frac{A}{N}\cdot
\frac{\ex{|Z_n(t,v)|^3}}{\sqrt{N}\,V_{n,v}}\cdot V_{n,v}
< \frac{A}{N}\cdot\frac{1}{4A}\cdot V_{n,v}
= \frac{V_{n,v}}{4N},
\]
where the strict inequality uses the choice of $\varepsilon_t$ and large $n$
above. Since $V_{n,v}/(2N)\le 1$, this and \eqref{eq:psi-taylor-II} imply
\[
|\Psi_n(t,v)| \le 1-\frac{V_{n,v}}{4N}.
\]
Using the inequality $(1-x/N)^N \le e^{-x}$, we get, for all sufficiently large $n$,
\[
|\Psi_n(t,v)|^N \le \exp\{-V_{n,v}/4\} \le \exp\{-c(t^2+v^2)\}
\]
with $c>0$ independent of $n$, $W$, and $v$. Consequently, for every $W\ge1$,
\[
\limsup_{n\to\infty}
\int_{W \le |v|\le \varepsilon_t a_n}|\Psi_n(t,v)|^N\,\dd v
\le
\int_{|v|\ge W} e^{-c(t^2+v^2)}\,\dd v.
\]
The right-hand side tends to zero as $W\to\infty$. Hence
\begin{align}
    \label{eq:tilted-llt-tail-estimate-limsup}
\lim_{W\to\infty}\limsup_{n\to\infty}
\int_{W \le |v|\le \varepsilon_t a_n}|\Psi_n(t,v)|^N\,\dd v=0.
\end{align}

\subsubsection*{Region (III)}
Write $u=v/a_n$. Then $\varepsilon_t\le |u|\le \pi$. Since
$\ex{e^{iuX} \mid \Lambda} = \exp\{\Lambda(e^{iu}-1)\}$,
\[
|\ex{ e^{iuX}}|
\le
\ex{\left| e^{\Lambda(e^{iu}-1)} \right|}
=
\ex{ e^{-\Lambda(1-\cos u)}}.
\]
For fixed $u$, the function $g(\lambda) := 1-e^{-\lambda(1-\cos u)}$ is concave on $[0,C]$ with $g(0)=0$, hence $g(\lambda) \ge (\lambda/C)\, g(C)$ there. Since $1-\cos u$ is even and increasing on $[0,\pi]$, we have $1-\cos u \ge 1-\cos\varepsilon_t$ for $\varepsilon_t \le |u| \le \pi$, whence
\[
1-e^{-\Lambda(1-\cos u)}
\ge \frac{\Lambda}{C}\left(1-e^{-C(1-\cos u)}\right)
\ge \kappa(\varepsilon_t) \Lambda,
\qquad
\kappa(\varepsilon_t)
:=
\frac{1-\exp\{-C(1-\cos\varepsilon_t)\}}{C}>0.
\]
Hence, 
\begin{align}
    \label{eq:tilted-llt-RIII-X-bound}
|\ex{ e^{iuX}}|
\le 1-\kappa(\varepsilon_t) \ex{\Lambda}
=1-\kappa(\varepsilon_t)\frac nN.
\end{align}
For the $Y$ part, by the inequalities $|e^{i\theta}-1|\le|\theta|$ and $\ex{|Y-\mu_Y|}\le \sigma_Y$ (Jensen's) with $b_n=\sqrt{N}\sigma_Y$, we have
\begin{align}
    \label{eq:tilted-llt-RIII-Y-bound}
    \ex{\left|e^{it\frac{Y-\mu_Y}{b_n}}-1\right|} & 
    \le \ex{\left|t\frac{Y-\mu_Y}{b_n}\right|} \le \frac{|t| \sigma_Y}{b_n} = \frac{|t|}{\sqrt{N}}. 
\end{align}
By \eqref{eq:tilted-llt-RIII-X-bound} and \eqref{eq:tilted-llt-RIII-Y-bound}, we have
\begin{align*}
    |\Psi_n(t, a_n u)| & = \left| \ex{e^{iu(X-\mu_X)}} + \ex{e^{iu(X-\mu_X)}\left(e^{it \frac{Y-\mu_Y}{b_n}} - 1 \right)} \right| \\
    & \le \ex{\left|e^{iu(X-\mu_X)}\right|} + \ex{\left|e^{it \frac{Y-\mu_Y}{b_n}}-1\right|} \\
    & \le 1-\kappa(\varepsilon_t)\frac{n}{N} + \frac{|t|}{\sqrt N}.
\end{align*}
Thus, uniformly for
$\varepsilon_t a_n \le |v|\le \pi a_n$,
\[
|\Psi_n(t,v)|
\le 1-\frac{\kappa(\varepsilon_t)}{2}\frac{n}{N}
\]
for all large $n$. The base is nonnegative, since $\kappa(\varepsilon_t)\, n/N \le \kappa(\varepsilon_t)\, C = 1-e^{-C(1-\cos\varepsilon_t)} < 1$. Hence, using $1-x\le e^{-x}$ again,
\begin{align}
    \label{eq:tilted-llt-tail-estimate}
\int_{\varepsilon_t a_n\le |v|\le \pi a_n}
|\Psi_n(t,v)|^N\,\dd v
\le 2\pi a_n\exp\{-\kappa(\varepsilon_t) n/2\}
\longrightarrow 0.
\end{align}

\subsubsection*{Combining the three regions}
We now combine the three estimates. Fix $W\ge 1$. Since $a_n \asymp \sqrt{n}$ by \eqref{eq:a_n-bound}, for all sufficiently large $n$ we have $W \le \varepsilon_t a_n$. Hence
\begin{align}
\label{eq:tilted-llt-final-decomposition-1}
&\left|
\int_{-\pi a_n}^{\pi a_n}\Psi_n(t,v)^N\,\dd v
-
\int_{-\infty}^{\infty}e^{-(t^2+v^2)/2}\,\dd v
\right| \\
\label{eq:tilted-llt-final-decomposition-2}
&\le
\left|
\int_{|v|\le W}
\left[\Psi_n(t,v)^N-e^{-(t^2+v^2)/2}\right] \dd v
\right| \\
\label{eq:tilted-llt-final-decomposition-3}
&\quad+
\int_{W \le |v|\le \varepsilon_t a_n}|\Psi_n(t,v)|^N\,\dd v
+
\int_{\varepsilon_t a_n\le |v|\le \pi a_n}|\Psi_n(t,v)|^N\,\dd v
+
\int_{|v|>W}e^{-\frac{t^2+v^2}{2}}\,\dd v .
\end{align}
Taking $\limsup_{n\to\infty}$ in \eqref{eq:tilted-llt-final-decomposition-1}, the term \eqref{eq:tilted-llt-final-decomposition-2} vanishes by \eqref{eq:tilted-llt-compact-limit}, the first term in \eqref{eq:tilted-llt-final-decomposition-3} vanishes by \eqref{eq:tilted-llt-tail-estimate-limsup}, and the second term in \eqref{eq:tilted-llt-final-decomposition-3} vanishes by \eqref{eq:tilted-llt-tail-estimate}. 
It follows that as $W\to\infty$, terms in \eqref{eq:tilted-llt-final-decomposition-2} and \eqref{eq:tilted-llt-final-decomposition-3} vanish. Therefore
\begin{align}
\label{eq:tilted-llt-unnormalized-limit}
\int_{-\pi a_n}^{\pi a_n}\Psi_n(t,v)^N\,\dd v
\longrightarrow
\int_{-\infty}^{\infty}e^{-(t^2+v^2)/2}\,\dd v
=\sqrt{2\pi}\,e^{-t^2/2}.
\end{align}
Multiplying \eqref{eq:tilted-llt-unnormalized-limit} by $1/(2\pi)$ gives \eqref{eq:tilted-llt}.
\end{proof}

\subsection{Proof of Lemma~\ref{lem:X-bound}}
\label{sec:proof-of-lem-X-bound}
\begin{proof}
If $\Upsilon_{\lambda}\sim\Pois(\lambda)$ for $0\le\lambda\le C$, then 
$\ex{\Upsilon_{\lambda}^3}=\lambda^3+3\lambda^2+\lambda\le B_C\lambda$ with $B_C := C^2+3C+1<\infty$. 

With $\Lambda = nQ_1$ and $\mu_X = \ex{\Lambda}=n/N$, we have $X | \Lambda \sim \Pois(\Lambda)$ and $\Lambda \sim \pi_1$. Therefore,
\[
\ex{X^3} = \ex{\ex{X^3 | \Lambda}} = \ex{\Lambda^3 + 3\Lambda^2 + \Lambda} \leq B_C\ex{\Lambda}.
\]
Using the inequality $|x-\mu_X|^3\le 4(x^3+\mu_X^3)$ with $\mu_X\le C$ and  $\mu_X^3\le C^2\mu_X$, we conclude
\[
\ex{|X-\mu_X|^3}\le A_C\mu_X,
\]
for some constant $A_C<\infty$. This implies
\[
    \frac{\ex{|X-\mu_X|^3}}{\sqrt{N} \sigma_X^3} \leq \frac{A_C\mu_X}{\sqrt{N}\,\mu_X^{3/2}} = O(n^{-1/2}).
\]
\end{proof}

\subsection{Proof of Lemma~\ref{lem:conditional_limit_theorem}}
\label{sec:proof_conditional_limit_theorem}
\begin{proof}
Direct evaluation using the Poisson central moments shows that the mean and variance of $\tilde{T}(w^*)$ under $\tilde{H}_0 := \tilde{H}(\mathbf{q}_{\unif})$ are 
\begin{align}
    \label{eq:mu_0_sigma_0}
     \mu_0 &:= \ex{\tilde{T}(w^*)\mid \tilde{H}_0} = 0, \qquad
     \sigma_0^2 := 2 n^2 / N,
 \end{align}
 and under $\tilde{H}(\pi^*)$, using also the law of total variance, 
 \begin{align}
    \label{eq:mu_1_sigma_1}
     \mu_1 &:= \ex{\tilde{T}(w^*)\mid \tilde{H}(\pi^*)} = n^2 \epsilon^2 N^{1-2/p}, \qquad \sigma_1^2 := \sigma_0^2(1+o(1)).
 \end{align}
 Notice that $\mu_1/\sigma_0 = u_n$.
 
 We now verify that both unconditioned laws $\tilde{H}_0$ and $\tilde{H}(\pi^*)$ satisfy the conditions on $\tilde{O}_1,\ldots,\tilde{O}_N$ in Theorem~\ref{thm:CLT}. 
For condition (i), set $\lambda_0:=n/N$ and $\Delta_n:=n\epsilon N^{-1/p}$. If $X\sim\Pois(\lambda)$, then a direct calculation from the first three Poisson moments gives
\[
\ex{w_X^*}=(\lambda-\lambda_0)^2,\qquad
\Cov{X,w_X^*}=2\lambda(\lambda-\lambda_0).
\]
For $\tilde{H}_0$, $\lambda=\lambda_0$, so the covariance, and hence the correlation, is zero. For $\tilde{H}(\pi^*)$, let $\Lambda=nQ_1=\lambda_0\pm\Delta_n$. The law of total covariance and the symmetry of $\pi^*$ give
\[
\Cov{\tilde{O}_1,w^*_{\tilde{O}_1}}
:=\ex{2\Lambda(\Lambda-\lambda_0)}
    +\Cov{\Lambda,(\Lambda-\lambda_0)^2}
:=2\Delta_n^2.
\]
Also $\Var{\tilde{O}_1}=\lambda_0+\Delta_n^2=\lambda_0(1+o(1))$, since
\[
\frac{\Delta_n^2}{\lambda_0}=\epsilon^2 nN^{1-2/p}=O(N^{-1/2})=o(1),
\]
and \eqref{eq:mu_1_sigma_1} gives $\Var{w^*_{\tilde{O}_1}}=2\lambda_0^2(1+o(1))$. Therefore
\[
\Corr{\tilde{O}_1,w^*_{\tilde{O}_1}}
=O\left(\frac{\Delta_n^2}{\lambda_0^{3/2}}\right)
=O(u_n/\sqrt{n})=o(1),
\]
which proves condition (i) under both laws.
For condition (ii), under $\tilde{H}_0$ the prior is $\delta_{1/N}$, while under $\tilde{H}(\pi^*)$ it is the two-point distribution in \eqref{eq:least_favorable_prior}. Thus $\ex{Q_1}=1/N$ and the support is finite in both cases. The support bound required in (ii) is $n \epsilon N^{-1/p} =O(1)$, which holds since $u_n \to u^*$ implies $\Delta_n^4 = O(n^2 N^{-3}) = O(N^{-1})$. 

It remains to verify condition (iii). First, conditioned on any $Q_1$,
\[
\Var{w^*_{\tilde{O}_1}} \geq \ex{\Var{w^*_{\tilde{O}_1}\mid Q_1}}
=\ex{2(nQ_1)^2}>0,
\]
by the law of total variance. This implies that $\Var{w^*_{\tilde{O}_1}}>0$ under both $\tilde{H}_0$ and $\tilde{H}(\pi^*)$. For the Lyapunov condition, note that in the bounded-rate regime $\lambda_0$ and $\Lambda$ are uniformly bounded, and $\Delta_n/\lambda_0=o(1)$. Since $w_m^*$ is quadratic in $m$, the Poisson moment formulas up to order six imply, uniformly for $\lambda\in\{\lambda_0,\lambda_0-\Delta_n,\lambda_0+\Delta_n\}$,
\[
\ex{\left|w_X^*-\ex{w_X^*}\right|^3}=O(\lambda_0^2),\qquad X\sim\Pois(\lambda).
\]
The same bound holds after mixing over $\pi^*$. Combining this with
$\Var{w^*_{\tilde{O}_1}}=2\lambda_0^2(1+o(1))$ under both $\tilde{H}_0$ and $\tilde{H}(\pi^*)$ gives
\[
\frac{\ex{\left|w^*_{\tilde{O}_1}
-\ex{w^*_{\tilde{O}_1}}\right|^3}}
{\sqrt{N}\Var{w^*_{\tilde{O}_1}}^{3/2}}
:=O\left(\frac{1}{\sqrt{N}\lambda_0}\right)
:=O\left(\frac{\sqrt{N}}{n}\right)=o(1),
\]
where the last step uses $N=o(n^2)$. This proves condition (iii).
By Theorem~\ref{thm:CLT}, $u_n \to u^*$, \eqref{eq:mu_0_sigma_0}, and \eqref{eq:mu_1_sigma_1}, we have the following convergence in distribution:
\begin{align}
    \label{eq:CLT_result}
    \frac{U(w^*)}{\sigma_0}  \to \begin{cases}
        \Ncal(0,1), & \tilde{H}_0(\mathbf{q}_{\unif} \mid \Scal_n), \\
        \Ncal(u^*,1), & \tilde{H}(\pi^* \mid \Scal_n).
    \end{cases}
\end{align}
Namely, we are facing the problem of testing in a simple Gaussian shift experiment. The minimal risk in this case is 
\[
\tilde{\rho}(\pi^*; \psi_{w^*,\tau^*} \mid \Scal_n) = 2\Phi(-u_n(1+o(1))/2) = 2\Phi(-u^*/2) + o(1),
\]
and the threshold attaining it is $\tau^* = \mu_1 / 2$.
\end{proof}

\subsection{Proof of Lemma~\ref{lem:Bayes_equivalence}}
\label{sec:proof_bayes_equiv}
\begin{proof}
Denote by $P_\pi$ the probability law $\mathbf{Q} \sim \pi$ where $\mathbf{Q} = (Q_1,\ldots,Q_N)$ with $Q_i \sim \pi_i$ independently for $i=1,\ldots,N$. By the definition of $\Pi_N$, each $\pi_i$ is supported on $[0,1]$, hence the variance of $|Q_i - 1/N|^p$ exists and is uniformly bounded in $i$.

Because $\pi = \prod_{i=1}^N \pi_i$, the law of large numbers implies
\begin{align*}
& \liminf_{N \to \infty} \frac{1}{N}\sum_{i=1}^N \epsilon^{-p}\left|Q_i - 1/N\right|^p \\
&= \liminf_{N \to \infty} \frac{\epsilon^{-p}}{N}\sum_{i=1}^N \ex{\left|Q_i - 1/N\right|^p} \geq 1
\end{align*}
$P_{\pi}$ almost surely, where we used that $\pi \in \Pi_N$ in the last inequality. It follows that for some sequence $\epsilon_N$ satisfying $\epsilon_N/\epsilon \to 1$ and $\epsilon_N/\epsilon \geq 1$, we have 
\begin{align}
    \label{eq:pi_Q_in_tilde_A_N}
    P_{\pi}\left[\mathbf{Q} \in \tilde{A}_N(\epsilon_N,p)\right] = \pi(\tilde{A}_N(\epsilon_N,p)) \to 1.
\end{align}
In particular, \eqref{eq:pi_Q_in_tilde_A_N} implies that $P_{\pi}{\sum_{i=1}^N Q_i = 0} \to 0$, so 
\[
P_{\pi}[\Scal_n] = \ex{\frac{e^{-n\sum_{i=1}^N Q_i}(n\sum_{i=1}^N Q_i)^n}{n!}} > 0.
\]
Therefore, we may define the conditional measure $\bar{\pi}(A) = \pi(A \cap \tilde{A}_N(\epsilon_N,p) \cap \Scal_n )/\pi(\tilde{A}_N(\epsilon_N,p)\cap \Scal_n)$. We have $\bar{\pi}(\tilde{A}_N(\epsilon_N,p))=1$, and hence by considering the point mass prior at any $\mathbf{q} \in \tilde{A}_N(\epsilon_N,p)$, for any test $\psi$ we have
\[
\Prp{\psi = 0 \mid \tilde{H}(\bar{\pi} | \Scal_n )} \leq \tilde{R}^*(\tilde{A}_N(\epsilon,p) \mid \Scal_n). 
\]
It follows that
\[
\tilde{\rho}^*(\bar{\pi} \mid \Scal_n) \leq \tilde{R}^*(\tilde{A}_N(\epsilon_N,p) \mid \Scal_n). 
\]

On the other hand, the total variation distance obeys $\mathrm{TV}(P_\pi,P_{\bar{\pi}})\to 0$ by \eqref{eq:pi_Q_in_tilde_A_N}. It follows that
\begin{align*}
\tilde{\rho}^*(\pi \mid \Scal_n) & = \tilde{\rho}^*(\bar{\pi} \mid \Scal_n)+o(1) \leq \tilde{R}^*(\tilde{A}_N(\epsilon_N,p) \mid \Scal_n)+o(1) \\
& = \tilde{R}^*(\tilde{A}_N(\epsilon,p) \mid \Scal_n)+o(1) = \tilde{R}^* + o(1),
\end{align*}
where the penultimate transition follows from continuity of $\tilde{R}^*(\tilde{A}_N(\epsilon,p))$ in $\epsilon$.
\end{proof}
\begin{remark}
The conditioning on $\Scal_n$ plays no essential role in the arguments above, which rely on the behavior of $\mathbf{Q}$ under the prior $\pi$ alone. Specifically, Lemma~\ref{lem:Bayes_equivalence} holds with conditioning on any other event with non-zero probability under $\Pr_\pi$. 
\end{remark}

\subsection{Proof of Lemma~\ref{lem:LR_equivalence}}
\label{sec:proof_LR_equivalence}
\begin{proof}
By Bayes' rule, it is enough to prove that
\[
    \Prp{\Scal_n \mid \tilde{H}(\pi^*)}/\Prp{ \Scal_n \mid \tilde{H}(\mathbf{q}_{\unif})} = 1+O(1/\sqrt{N}).
\]
The sum of Poisson random variables is a Poisson random variable, hence
\begin{align*}
\Prp{\Scal_n \mid \tilde{H}(\mathbf{q}_{\unif})} & = \Prp{\sum_{i=1}^N \tilde{O}_i = n \mid \tilde{H}(\mathbf{q}_{\unif})} = \frac{e^{-n}n^n}{n!}.
\end{align*}
When $\mathbf{Q}$ is drawn from $\pi^*$, the rate of $\sum_{i=1}^N \tilde{O}_i$ is the random variable $\Lambda := n + nN^{-1/p}\epsilon(2B-N)$ where $B$ indicates the number of upward perturbations of $1/N$ ('$+$' masses) in $Q$. Since $B \sim \Bin(N,1/2)$ (binomial), we get
\begin{align*}
    \Prp{\sum_{i=1}^N \tilde{O}_i = n \mid \tilde{H}(\pi^*)} & = \ex{ \frac{e^{-\Lambda}}{n!} \Lambda^n } \\
    & = \frac{e^{-n}}{n!}n^n \ex{ e^{-nN^{-1/p}\epsilon(2B-N)} \left(1 + N^{-1/p}\epsilon(2B-N)\right)^n }.
\end{align*}
From the inequality $(1+x/n)^n \leq e^x$, we get
\begin{align}
\label{eq:less_than_one}
\ex{ e^{-nN^{-1/p}\epsilon(2B-N)} \left(1 + N^{-1/p}\epsilon(2B-N)\right)^n } \leq 1.
\end{align}
To obtain a matching lower bound, set
\[
\Delta := N^{-1/p}\epsilon(2B-N),
\]
so that the expectation in \eqref{eq:less_than_one} is $\ex{\exp\big(n(\log(1+\Delta)-\Delta)\big)}$. On the event $\{|\Delta|\leq 1/2\}$, we use the bound
\[
\log(1+x) \geq x - x^2,\qquad |x|\leq 1/2,
\]
which implies
\[
\exp\big(n(\log(1+\Delta)-\Delta)\big) \geq \exp(-n\Delta^2).
\]
Therefore,
\begin{align}
\ex{ e^{-n\Delta}\left(1+\Delta\right)^n }
&= \ex{\exp\big(n(\log(1+\Delta)-\Delta)\big)} \nonumber \\
&\geq \ex{\exp(-n\Delta^2)\,\one_{\{|\Delta|\leq 1/2\}} }.
\label{eq:lower_than_one}
\end{align}
Since $B\sim\Bin(N,1/2)$, we have $2B-N = O_{p}(\sqrt{N})$, hence $\Delta = O_{p}(\epsilon N^{1/2-1/p})$ and, in particular, $\Prp{|\Delta|\leq 1/2}\to 1$ under our asymptotic regime. Moreover, on $\{|\Delta|\leq 1/2\}$,
\[
0\leq 1-\exp(-n\Delta^2) \leq n\Delta^2,
\]
so \eqref{eq:lower_than_one} yields
\[
\ex{ e^{-n\Delta}\left(1+\Delta\right)^n } \geq 1 - \ex{n\Delta^2} + o(1) = 1 - O\!\left(n\epsilon^2 N^{1-2/p}\right) + o(1).
\]
In particular, $\Prp{\Scal_n \mid \tilde{H}(\pi^*)}/\Prp{\Scal_n \mid H_0} = 1+O(1/\sqrt{N})$.
\end{proof}

\subsection{Taylor expansion for the likelihood ratio weights}
\label{sec:taylor_expansion}

Recall the kernel $g_{m,\lambda}$ from \eqref{eq:g_def} of Lemma~\ref{lem:unconditioned_LR_test},
\[
g_{m,\lambda}(s) = \frac{1}{2}e^{-\lambda s} \left( 1 + s \right)^{m} + \frac{1}{2}e^{\lambda s} \left( 1 - s \right)^{m}.
\]
We derive the Taylor expansion of $\log g_{m,\lambda}(s)$ as $s \to 0$.

\begin{lem}
\label{lem:taylor_expansion}
For any $s \in \reals$ with $|s| < 1$ and $m=0,1,\ldots$, we have
\begin{align}
\label{eq:taylor_f}
    g_{m,\lambda}(s) & = 1 + \frac{s^2}{2}\left( (m-\lambda)^2 - m \right) + o\!\left(s^2(e^m + \lambda^2)\right).
\end{align}
Consequently,
\begin{align}
\label{eq:taylor_log_f}
\log g_{m,\lambda}(s) = \frac{s^2}{2}\left( (m-\lambda)^2 - m \right)+o\!\left(s^2 (e^m + \lambda^2)\right).
\end{align}
\end{lem}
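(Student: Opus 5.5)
The plan is to expand $g_{m,\lambda}(s)$ directly in powers of $s$, using the symmetry $s\mapsto -s$ to kill the odd terms. Write $g_{m,\lambda}(s)=\tfrac12\bigl(h(s)+h(-s)\bigr)$ with $h(s):=e^{-\lambda s}(1+s)^m$. Then $g_{m,\lambda}$ is even in $s$ and $g_{m,\lambda}(0)=1$, so only even powers survive. This gives
\[
g_{m,\lambda}(s)=1+\frac{h''(0)}{2}s^2+R(s),\qquad R(s)=\sum_{k\ge 2}\frac{h^{(2k)}(0)}{(2k)!}s^{2k}.
\]
The second derivative is computed from the product rule:
\[
h''(0)=\lambda^2-2\lambda m+m(m-1)=(m-\lambda)^2-m.
\]
This is exactly the claimed quadratic coefficient.

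The main step is to bound the remainder $R(s)$ uniformly in $m$ and $\lambda$. The coefficients of $h$ are $h^{(j)}(0)/j!=\sum_{a+b=j}\binom{m}{a}\frac{(-\lambda)^b}{b!}$, which in absolute value are at most $\sum_{a+b=j}\binom{m}{a}\frac{\lambda^b}{b!}$. Summing over $j\ge4$ with weight $|s|^j$ gives
\[
|R(s)|\le s^4\sum_{j\ge4}\Bigl(\sum_{a+b=j}\binom{m}{a}\tfrac{\lambda^b}{b!}\Bigr)|s|^{j-4}.
\]
For $|s|\le 1/2$, say, the right-hand side is at most $s^4\cdot c\,(1+|s|)^m e^{\lambda|s|}\cdot 2^4$, and this is bounded by a constant times $s^4 e^{m}e^{\lambda}$. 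This is crude in $\lambda$, which is fine because in the paper's regime $\lambda=n/N=O(1)$. To keep the stated form, I would instead split the remainder according to how many derivatives fall on each factor. The $\lambda$-only part is $s^4\lambda^4 e^{\lambda|s|}$, which is $o(s^2\lambda^2)$ when $\lambda s\to0$. The $m$-only part is $O(s^4 2^m)=o(s^2e^m)$. Mixed terms are dominated via $ab\le a^2+b^2$.

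This yields $R(s)=O(s^4)(e^m+\lambda^2)$ times bounded factors, hence $o(s^2(e^m+\lambda^2))$ as $s\to0$ (with $\lambda s$ bounded, as in Lemma~\ref{lem:weight_approximation}).

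For the logarithm, set $x:=g_{m,\lambda}(s)-1$ and use $\log(1+x)=x+O(x^2)$. The error $x^2$ is $O\bigl(s^4((m-\lambda)^2+m)^2\bigr)+R^2$. Since $(m^2+\lambda^2)^2s^4\le C s^2(e^m+\lambda^2)\cdot s^2(e^{m}+\lambda^2)$, this error is also $o(s^2(e^m+\lambda^2))$ when $s^2(e^m+\lambda^2)$ is small. When that quantity is not small, I would use the direct bound $g_{m,\lambda}(s)\le (1+|s|)^m e^{\lambda|s|}$, together with $g_{m,\lambda}(s)\ge \tfrac12 e^{-\lambda|s|}(1-|s|)^m>0$. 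These give $|\log g_{m,\lambda}(s)|\lesssim m|s|+\lambda|s|$, which is again within the claimed error order because the statement's error is allowed to be as large as $s^2e^m$.

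The main obstacle is making the $o(\cdot)$ uniform in $m$, since the expansion in $s$ is not uniform when $ms$ is large. My approach is to absorb that issue with the generous $e^m$ factor, via the crude binomial bound $(1+|s|)^m\le e^m$.
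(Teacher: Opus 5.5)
Your expansion of $g_{m,\lambda}(s)$ is correct and is essentially the paper's argument. The paper multiplies the second-order expansions of $e^{\mp\lambda s}$ and $(1\pm s)^m$, lets the linear terms cancel, and absorbs the rest via $\max_k\binom{m}{k}=o(e^m)$. You get the same from evenness, $h''(0)=(m-\lambda)^2-m$, and a generating-function bound on the tail. Two small corrections: for $|s|\le 1/2$ that bound is $|R(s)|\le 16\,s^4(3/2)^m e^{\lambda/2}$, and you need $\lambda s\to 0$, not merely $\lambda s$ bounded, as $g_{0,\lambda}(s)=\cosh(\lambda s)$ shows. Your first branch for the logarithm is likewise the paper's step. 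Like the paper's, it only covers the range where $s^2(e^m+\lambda^2)\to 0$.

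The gap is in your second branch, which is meant to supply uniformity in $m$.
\begin{itemize}
\item The lower bound $g_{m,\lambda}(s)\ge \tfrac12 e^{-\lambda|s|}(1-|s|)^m$ only yields $\log g_{m,\lambda}(s)\ge -\log 2-\lambda|s|+m\log(1-|s|)$. So $|\log g_{m,\lambda}(s)|\lesssim (m+\lambda)|s|$ does not follow. The additive $\log 2$ is $o(s^2(e^m+\lambda^2))$ only if $s^2e^m\to\infty$.
\item In the intermediate range $s^2e^m\asymp 1$, i.e.\ $m\approx 2\log(1/|s|)$, your first branch also fails. You bounded the quadratic error by $C\,[s^2(e^m+\lambda^2)]^2$, which is then of the same order as $s^2(e^m+\lambda^2)$.
\end{itemize}
So this range of $m$ is not covered, even though $\log g_{m,\lambda}(s)\to 0$ there.

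The repair is short. By AM--GM, $g_{m,\lambda}(s)\ge\sqrt{h(s)h(-s)}=(1-s^2)^{m/2}$. Hence $-ms^2\le\log g_{m,\lambda}(s)\le (m+\lambda)|s|$ for $|s|\le 1/2$. Your two branches then close with a fixed threshold $\eta$ on $s^2(e^m+\lambda^2)$, followed by $\eta\to 0$.

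Alternatively, split on $m|s|\le\delta$ versus $m|s|>\delta$.
\begin{itemize}
\item In the first case (given $\lambda s\to 0$), $h(\pm s)=1+O(\delta)$, so $\log(1+x)=x+O(x^2)$ applies to $x=g_{m,\lambda}(s)-1$. Moreover $x^2\lesssim s^4(m^4+\lambda^4)+R(s)^2$, with $s^4m^4/(s^2e^m)=s^2m^4e^{-m}\to 0$ uniformly in $m$.
\item In the second case, $s^2e^m\ge s^2e^{\delta/|s|}\to\infty$, so even your crude bound with the $\log 2$ suffices.
\end{itemize}
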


\begin{proof}
By the binomial expansion and exponential Taylor series, we have
\begin{align*}
    2 g_{m,\lambda}(s) &= e^{-\lambda s}(1+s)^m + e^{\lambda s}(1-s)^m\\
    &= \left(1 - \lambda s + \lambda^2 s^2/2 + o(s^2 \lambda^2)\right) \left(1 + m s + \binom{m}{2}s^2 + o(s^2 e^m)\right) \\
    &\quad + \left(1 + \lambda s + \lambda^2 s^2/2 + o(s^2 \lambda^2)\right) \left(1 - m s + \binom{m}{2}s^2 + o(s^2 e^m)\right).
\end{align*}
By Stirling's approximation, the maximal binomial coefficient $\binom{m}{k}$ is $O(2^m/\sqrt{m})$, which is $o(e^{m})$.
Collecting terms of order $s^2$ and lower, the linear terms cancel and we obtain
\begin{align*}
2 g_{m,\lambda}(s) &= 2 + s^2 \left(m(m-1) - 2\lambda m + \lambda^2 \right) + o(s^2(e^m + \lambda^2))\\
&= 2 + s^2 \left((m-\lambda)^2 - m \right) + o(s^2(e^m + \lambda^2)).
\end{align*}
Dividing by 2 yields \eqref{eq:taylor_f}.

For \eqref{eq:taylor_log_f}, we use the standard expansion $\log(1+z) = z + O(z^2)$ for $|z| \ll 1$. Since $g_{m,\lambda}(s) - 1 = O(s^2(\lambda^2 + e^m))$ as $s \to 0$, we have
\[
\log g_{m,\lambda}(s) = \left(g_{m,\lambda}(s) - 1\right) + O\!\left((g_{m,\lambda}(s)-1)^2\right) = \frac{s^2}{2}\left((m-\lambda)^2-m\right) + o(s^2(e^m + \lambda^2)),
\]
which completes the proof.
\end{proof}

\bibliographystyle{plainnat}
\bibliography{multinomials}

\end{document}